\def\TG{\mathcal{TG}}
\newtheorem{definition}{Definition}
\newtheorem{proposition}{Proposition}
\newtheorem{corollary}{Corollary}
\newtheorem{example}{Example}
\newcommand*{\tran}{^{\mkern-1.5mu\mathsf{T}}}
\newtheorem{lemma}{Lemma}
\newtheorem{theorem}{Theorem}
\def\tr{\mathop{\mathrm{tr}}}
 \def\R{\mathbb{R}}
 \def\T{\mathbb{T}}
\title{Principal Symmetric Space Analysis}
\author{Stephen R Marsland, Robert I McLachlan, and Charles Curry}
\begin{document}
\maketitle
\section{Introduction}
Principal Components Analysis (PCA \cite{jolliffe}), traditionally applied for data on a Euclidean space $E^n$, has many notable features that have made it one of the most widely used of all statistical techniques. We single out the following:
\begin{enumerate}
\item The approximating subspaces (affine subspaces of $E^n$) have zero extrinsic curvature;
\item any two affine subspaces of the same dimension are related by a Euclidean transformation;
\item the best approximations of each dimension are nested (that is, the best approximation by a $k$-dimensional subspace lies in the best approximation by a $k+1$-dimensional subspace); and
\item the best approximations of each dimension from 0 to $n-1$ can be computed easily using linear algebra.
\end{enumerate}
The underlying idea of PCA has been extended to deal with data on non-Euclidean manifolds.
One such method is that of Principal Geodesic Analysis (PGA \cite{PGA,DiffTens,gebhart}). For data on a Riemannian manifold
$M$,  the Karcher mean $x$ is computed and the data pulled back to the tangent
space $T_xM$ by the logarithm of the Riemannian exponential map at $x$ (see Figure \ref{fig:pga}). PCA can
now be applied to the data on this Euclidean vector space. However, this and related methods
suffer from a fundamental flaw in that they fail to deal properly with the curvature of the manifold.
Two geodesics with  common base points and distant tangent vectors may pass close to each other
or intersect (see Figure \ref{fig:pga}). In this situation, nearby data points would become far apart in their linear approximation.

\begin{figure}
\begin{center}
\includegraphics[width=5cm]{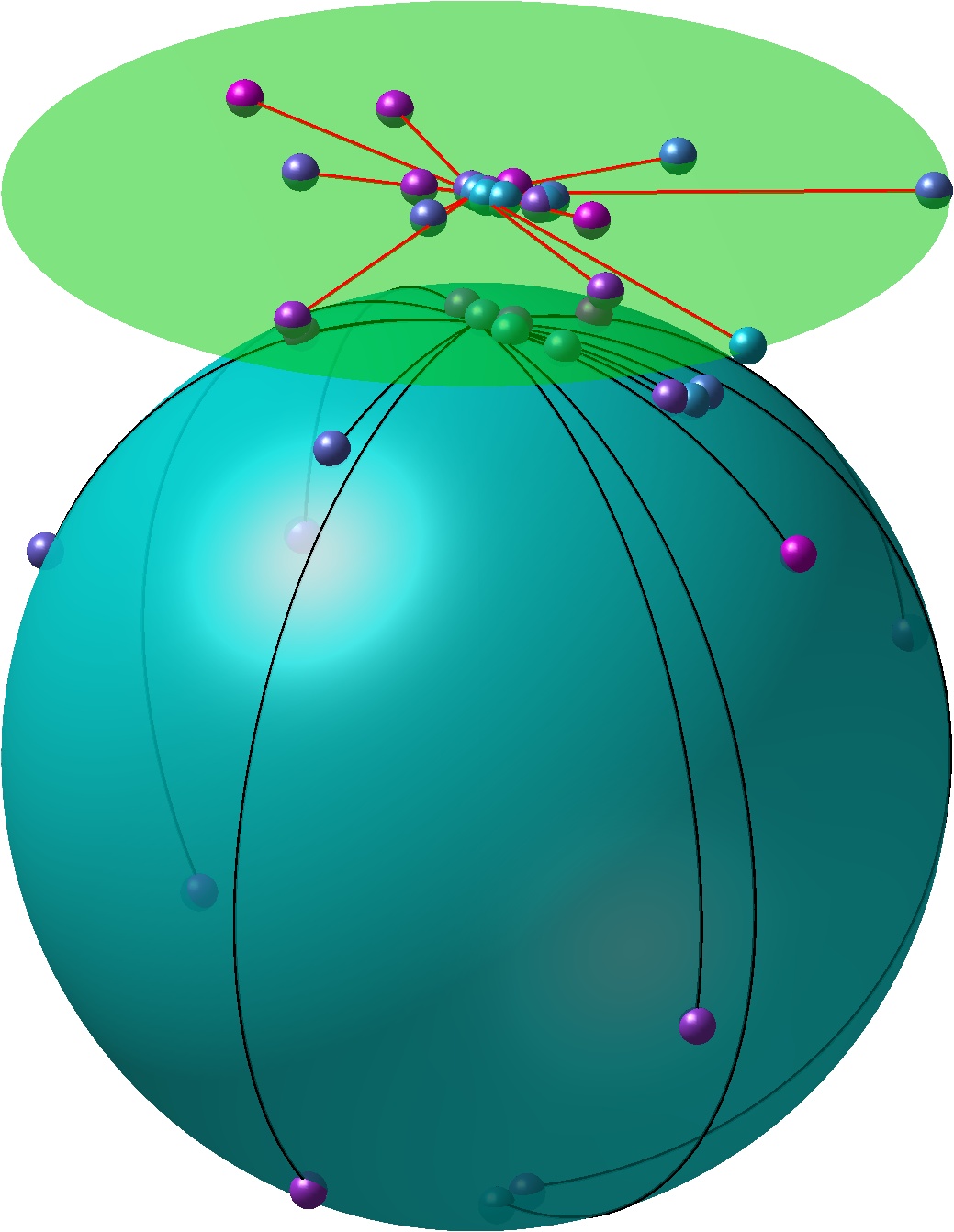}
\end{center}
\caption{\label{fig:pga} In Principal Geodesic Analysis, the data (here 20 points on a sphere) is pulled back to the
tangent space of the Karcher mean (shown here as a disk) using geodesics. Data points near the mean are
well represented, but data points far from the mean (here, near the south pole) become far apart in the linear approximation.}
\end{figure}

In seeking a method that avoids this flaw we have focussed on property (1) above. A submanifold $N$ of a Riemannian manifold $M$ has zero extrinsic curvature 
if and only
if it is  totally geodesic (i.e., any geodesic of $N$ is also a geodesic of $M$). Such submanifolds provide
excellent approximating spaces, being in a sense the flattest or simplest possible lower-dimensional
representations of the data. One-dimensional totally geodesic submanifolds are geodesics, which 
are widely used for 1-dimensional interpolation and data fitting on manifolds \cite{fletcher,kenobi,rent}. 

Generic manifolds have no totally geodesic submanifolds of dimension higher than 1, but Riemannian symmetric
spaces  have many. Examples
of Riemannian symmetric spaces are compact Lie groups, Euclidean spaces, spheres, projective spaces,  Grassmannians, and products of these; these are all important examples of nonlinear domains for data.  We will see
that the structure of totally geodesic submanifolds offers rich possibilities for data reduction
and for the discovery of hidden structure in data sets. Totally geodesic submanifolds of Riemannian symmetric spaces are themselves Riemannian symmetric, which offers the possibility of a nested structure as in point (3) above. 

Although some form of nesting is desirable, we will see that, given two best approximating
totally geodesic submanifolds, one is not necessarily contained in the other.
To overcome this, in 
this paper we define the {\em symmetric space approximations} of a dataset in a Riemannian symmetric space. This is a set whose elements are best approximating totally geodesic submanifolds. 
Applying this construction recursively gives the {\em principal symmetric space approximation} which is structured as a rooted tree. In this sense the nesting structure is retained, although
it may be complicated in specific instances.

In Section \ref{sec:symspace} we review the relevant elements of symmetric spaces. In particular,
the determination of totally geodesic submanifolds can be reduced to a purely algebraic equation in a vector space (the Lie algebra of the symmetry group of the symmetric space). Solving this equation may be difficult, however; it has been solved completely only for spaces of rank 1 (such as spheres and projective spaces) and rank 2 (such as 2-Grassmannians and products of two spheres). 
In the remainder of the paper, therefore, we proceed by example. Section \ref{sec:sphere} considers data on the $n$-dimensional sphere $S^n$. Section \ref{sec:grass} considers
data on the Grassmannian $G(k,n)$ of $k$-planes in $\mathbb{R}^n$; even here we need to restrict to the simple submanifolds $G(k,m)$ of $k$-planes in $\mathbb{R}^m$. In both of these cases,
we will show that the approximation problem can be linearised so that a PCA-like nested
sequences of approximating submanifolds can be determined using linear algebra.

More complicated cases are handled in Section \ref{sec:products} on products of spheres.
The two subcases that we consider are tori $(S^1)^n$ and polyspheres $(S^2)^n$. 
Each of these has an infinite number of distinct types of totally geodesic submanifolds and
each reveals new features of the general situation. 

We now introduce the central ideas of principal symmetric space approximation.

Let $M=G/H$ be a Riemannian symmetric space. Let $\TG(M)$ be the set of connected totally
geodesic submanifolds of $M$. $G$ acts on $\TG(M)$ and partitions it into group orbits. We regard the submanifolds
in each orbit as being of equivalent structure and complexity, so that if there is a unique best
approximation within an orbit, we choose it; but the submanifolds from different orbits,
even if of the same dimension, are different geometrically and are best regarded as representing
different models. 

Let the data set be $X:=(x_1,\dots,x_d)$, where $x_i\in M$, and 
let $N\in\TG(M)$ be a totally geodesic submanifold of $M$. Let 
\[ d(x,N) = \min_{y\in N} d(x,y)\]
and
\[ d(X,N)^2 =\sum_{i=1}^d d(x_i,N)^2\]
\begin{definition}
The {\em symmetric space approximations}  of $X\in M^d$ {\em with respect to $M$} are the elements of
\[
\mathcal{SSA}(X,M) := \left\{ gN\colon g = \mathop{\rm argmin}_{g\in G} d(X,gN),\quad N\in\TG(M)\right\}
\]
where local minima are taken.
\end{definition}
Thus, each  element of $\mathcal{SSA}(X,M)$ is a totally geodesic submanifold $N$ of $M$, which best approximates the data in the sense that the approximation cannot be improved by passing to $gN$ where $g$ is close to the identity.

As each $N\in\mathcal{SSA}(X,M)$ is a Riemannian symmetric space, it typically has many totally geodesic
submanifolds itself. These are already contained in $\TG(M)$. We can now calculate the symmetric
space approximations of $X$ with respect to each such $N$. Repeating this construction gives a tree of
submanifolds. Each branch contains a nested sequence of approximations of decreasing dimensions, with
each branch terminating in a submanifold of dimension 0, that is, a point.

\begin{definition}
The {\em principal symmetric space approximation} $\mathcal{PSSA}(X,M)$ of $X$ with respect to $M$
is the rooted tree in which
\begin{enumerate}
\item each node is a totally geodesic submanifold of $M$;
\item the root node is $M$; and
\item the children of a node $N$ are the symmetric space approximations of $X$ with respect to $N$.
\end{enumerate}
\end{definition}

Examples are the unbranched tree $E^n\supset E^{n-1}\supset \dots\supset E^0$ found in Euclidean PCA,
and the 2-node tree $M\supset\{x\}$ for any Riemannian manifold $M$, where $x$ is the Karcher mean of $X$.


\section{Symmetric spaces}
\label{sec:symspace}
We give a brief account of symmetric spaces relevant to the sequel. The material presented here is standard, see for instance \cite[Chapter XI]{KN}.
\begin{definition}
A symmetric space is a triple $(G,H,\sigma)$ where $G$ is a connected Lie group, $\sigma$ is an involutive automorphism of $G$, and $H$ is  a closed subgroup of $G$ such that $H$ lies between the isotropy subgroup $G_{\sigma}$ and its identity component $G_{\sigma}^{o}$. 
\end{definition}

In particular, the manifold $M=G/H$ is a canonically reductive homogeneous space and hence comes equipped with a canonical linear connection. Let $s_o$ be the automorphism of $G/H$ induced by $\sigma$. For any point $x=g.o$ where $o$ is the origin, the mapping $s_x = g.s_o.g^{-1}$ is independent of the choice of $g$. Moreover, $s_x$ is a symmetry of the canonical connection for all $x$, i.e. a diffeomorphism of a neighbourhood of $x$ onto itself sending $\exp{X}\rightarrow\exp{-X}$ for any tangent vector $X$. We now present the infinitesimal picture.

\begin{definition}
A symmetric Lie algebra is a triple $(\mathfrak{g},\mathfrak{h},\sigma)$ where $\mathfrak{g}$ is a Lie algebra, $\sigma$ is an involutive automorphism of $\mathfrak{g}$, and  $\mathfrak{h}\subset\mathfrak{g}$ is the Lie subalgebra of elements fixed by $\sigma$.
\end{definition}

There is a one-to-one correspondence between effective symmetric Lie algebras and almost effective (i.e., the only normal subgroups of $G$ are discrete) symmetric spaces with $G$ simply connected and $H$ connected.

The involution $\sigma$ induces a decomposition $\mathfrak{g}=\mathfrak{h}+\mathfrak{m}$ of $\mathfrak{g}$ into the $\pm 1$ eigenspaces of $\sigma$, called the canonical decomposition.  The following relations the hold, which suffice to characterize symmetric Lie algebras:
\[
[\mathfrak{h},\mathfrak{h}] \subset \mathfrak{h},\quad
[\mathfrak{h},\mathfrak{m}] \subset \mathfrak{m},\quad
[\mathfrak{m},\mathfrak{m}] \subset \mathfrak{h}.
\]
Examples of symmetric spaces include the oriented Grassmannian $\mathrm{G}_+(k,n)$ of oriented $k$-planes in $\mathbb{R}^n$. The symmetric space structure is described by $\mathrm{G}_+(k,n) \simeq SO(n)/(SO(k)\times SO(n-k))$, with automorphism $\sigma(A)=SAS^{-1}$,
\[
S=\begin{pmatrix}
-I_p & 0 \\
0 & I_q
\end{pmatrix},
\]  
where $I_p$ is the $p\times p$ identity matrix. The case $k=1$ gives the symmetric space structure of the sphere $S^n$. The unoriented case $\mathrm{G}(k,n)\simeq O(n)/(O(k)\times O(n-k))$ is similar, and specialization to $k=1$ then gives projective spaces. We also note that there is a natural direct product of symmetric spaces: $(G,H,\sigma)\times (G',H',\sigma')=(G\times G',H\times H',\sigma\times\sigma')$.

A submanifold $N\subset M$ is said to be totally geodesic if for all points $x\in N$ and tangent vectors $X\in T_x(N)$, the geodesic $\exp(tX)$ is contained in $N$ for sufficiently small $t$. Where $M$ is a Riemannian manifold, this is equivalent to requiring that the induced metric on $N$ coincides with the metric on $M$. A Lie triple system $\mathfrak{m}$ is a subspace of a Lie algebra for which $[[\mathfrak{m},\mathfrak{m}],\mathfrak{m}]\subset \mathfrak{m}$. The following result underlies our interest in totally geodesic submanifolds of symmetric spaces:
\begin{theorem}
Let $(G,H,\sigma)$ be a symmetric space with symmetric Lie algebra $\mathfrak{g}=\mathfrak{h}+\mathfrak{m}$. There is a one-to-one correspondence between complete totally geodesic submanifolds $M'$ containing the origin and Lie triple systems $\mathfrak{m'}\subset \mathfrak{m}$. Moreover $(G',H',\sigma')$ is a symmetric subspace, where $G'$ is the largest connected Lie subgroup of $G$ leaving $M'$ invariant, $H'=G'\cap H$, and $\sigma'=\sigma|_{G'}$.
\end{theorem}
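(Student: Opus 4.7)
The plan is to prove the two directions of the correspondence separately, using the Cartan curvature formula for symmetric spaces as the central bridge between the geometric condition of being totally geodesic and the algebraic Lie triple system condition.

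For the forward direction, suppose $M'\subset M$ is a complete totally geodesic submanifold through the origin $o$, and set $\mathfrak{m}' := T_oM'\subset T_oM\cong\mathfrak{m}$. Since $M'$ is totally geodesic, its second fundamental form vanishes and hence the Gauss equation reduces to $R^{M'}(X,Y)Z = R^M(X,Y)Z$ for $X,Y,Z\in\mathfrak{m}'$; in particular $\mathfrak{m}'$ is invariant under the curvature tensor $R^M$ at $o$. On a symmetric space the canonical connection satisfies the standard identity $R^M_o(X,Y)Z=-[[X,Y],Z]$ for $X,Y,Z\in\mathfrak{m}$, so curvature invariance of $\mathfrak{m}'$ is precisely the Lie triple system condition $[[\mathfrak{m}',\mathfrak{m}'],\mathfrak{m}']\subset\mathfrak{m}'$.

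For the reverse direction, suppose $\mathfrak{m}'\subset\mathfrak{m}$ is a Lie triple system. Define $\mathfrak{h}':=[\mathfrak{m}',\mathfrak{m}']\subset\mathfrak{h}$ (using $[\mathfrak{m},\mathfrak{m}]\subset\mathfrak{h}$) and $\mathfrak{g}':=\mathfrak{h}'+\mathfrak{m}'$. I would verify that $\mathfrak{g}'$ is a Lie subalgebra by checking the three bracket inclusions: $[\mathfrak{h}',\mathfrak{m}']\subset\mathfrak{m}'$ is immediate from the triple system condition, $[\mathfrak{m}',\mathfrak{m}']\subset\mathfrak{h}'$ is the definition, and $[\mathfrak{h}',\mathfrak{h}']\subset\mathfrak{h}'$ follows from the Jacobi identity applied to the triple system condition. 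Since $\sigma$ acts as $+1$ on $\mathfrak{h}'$ and $-1$ on $\mathfrak{m}'$, the restriction $\sigma':=\sigma|_{\mathfrak{g}'}$ is an involution of $\mathfrak{g}'$ with fixed subalgebra $\mathfrak{h}'$, so $(\mathfrak{g}',\mathfrak{h}',\sigma')$ is a symmetric Lie algebra with canonical decomposition $\mathfrak{g}'=\mathfrak{h}'+\mathfrak{m}'$. Let $G'$ be the connected Lie subgroup of $G$ with Lie algebra $\mathfrak{g}'$ and set $M':=G'\cdot o$. Then $T_oM'=\mathfrak{m}'$, and the geodesics of $M$ through $o$ with initial velocity in $\mathfrak{m}'$ are precisely $t\mapsto\exp(tX)\cdot o$ for $X\in\mathfrak{m}'$, which remain in $M'$; translating by elements of $G'$ shows $M'$ is totally geodesic and complete.

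The two constructions are inverse: starting from $M'$ one recovers $\mathfrak{m}'=T_oM'$, and starting from $\mathfrak{m}'$ the orbit $G'\cdot o$ has tangent space $\mathfrak{m}'$ at $o$ and is determined (because it is totally geodesic and complete through $o$) by this tangent space. Finally, to identify $(G',H',\sigma')$ with the symmetric subspace above, I would show that the $G'$ just constructed is the largest connected subgroup of $G$ preserving $M'$, which follows because its Lie algebra $\mathfrak{g}'=[\mathfrak{m}',\mathfrak{m}']+\mathfrak{m}'$ is exactly the set of elements of $\mathfrak{g}$ whose infinitesimal action on $M$ is tangent to $M'$ along $M'$; then $H'=G'\cap H$ and $\sigma'=\sigma|_{G'}$ by construction.

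I expect the main obstacle to be the careful passage from the curvature-invariance condition at a single point to the global statement for a complete submanifold, in particular ensuring that the orbit $G'\cdot o$ really is a (nonsingular, embedded) submanifold of $M$ with the claimed tangent space and is the unique complete totally geodesic submanifold tangent to $\mathfrak{m}'$ at $o$. This uniqueness, together with completeness, is what makes the correspondence a genuine bijection rather than merely a local construction.
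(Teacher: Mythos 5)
Your proposal is correct and follows the standard argument from Kobayashi--Nomizu that the paper cites rather than proves: the paper's only comment on the proof is precisely your reverse-direction construction $\mathfrak{h}'=[\mathfrak{m}',\mathfrak{m}']$, $\mathfrak{g}'=\mathfrak{h}'+\mathfrak{m}'$, and your forward direction via curvature invariance and the identity $R_o(X,Y)Z=-[[X,Y],Z]$ is the standard complement. No discrepancy with the paper's approach.
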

Note that the proof constructs the symmetric subalgebra $(\mathfrak{g}',\mathfrak{h}',\sigma')$. Indeed, given such an $\mathfrak{m}'$, we take $\mathfrak{h}'=[\mathfrak{m}',\mathfrak{m}']$, then set $\mathfrak{g}'=\mathfrak{h}'+\mathfrak{m'}$. 

The problem of classifying totally geodesic submanifolds of symmetric spaces is thus reduced to an algebraic one. It remains a difficult task \cite{CN,Klein,wolf}. Moreover, there may exist complicated totally geodesic submanifolds which are of little physical relevance, so in some cases we restrict our attention to subfamilies of symmetric subspaces.

The notion of a symmetric space approximation requires a distance function on the manifold. It is most natural to specify this through a Riemannian metric. This makes most sense where our notion of totally geodesic submanifold coincides with the Riemannian geodesics, as summarized by the following definition.  

\begin{definition}
A Riemannian symmetric space is a symmetric space for which the canonical connection coincides with the Riemannian (Levi-Civita) connection.
\end{definition}
This implies that the symmetries $s_x$ are isometries. A symmetric space equipped with a metric is Riemannian symmetric if the metric is $G$-invariant. Given a symmetric space $(G,H,\sigma)$ for which $\mathrm{ad}_{\mathfrak{g}}(H)$ is compact, a $G$-invariant Riemannian metric may be constructed in a canonical manner.

All of the symmetric spaces we consider are canonically Riemannian symmetric spaces. Nonetheless, for practical purposes we will minimize distances which differ from the Riemannian distance, typically to obtain a linearization of the minimization problem. We will say that two metrics $d_1$, $d_2$ are {\em compatible} if they agree
up to first order for nearby points, that is, if $d_2(x,y) = d_1(x,y) + \mathcal{O}(d_1(x,y)^2)$. 
In a non-Riemannian metric space, the length of a curve is defined by a Riemann sum, and thus
one still has the concept of geodesic and of totally geodesic submanifolds in this case. 
Moreover, the geodesics and totally geodesic submanifolds of a given smooth manifold equipped
with two compatible metrics coincide. Thus, although perturbing the Riemannian metric of a Riemannian 
symmetric space changes the {\em specific} principal symmetric space approximation corresponding
to a given set of data, it does not change the system of totally geodesic submanifolds itself.

\section{Spheres}
\label{sec:sphere}
Datasets on high-dimensional spheres arise naturally whenever we have a set of measurements in a Euclidean space for which the magnitude is irrelevant. One important instance concerns directional data, see \cite{PNS} and the references therein for more examples.

The connected totally geodesic submanifolds of $S^n$ are the spheres $S^k$, realized as the image
of a standard sphere $x_1^2+\dots+x_{k+1}^2=1$ in $\R^{n+1}$ under an element of $SO(n+1)$
\cite[Thm 1]{wolf}.

We consider first the case of $S^2$, represented as the set of unit vectors in $\mathbb{R}^3$. 
Geodesics on $S^2$ are precisely the great circles, which may be described as the set of points in $S^2$ orthogonal to a given unit vector $v$. We call this great circle $S_v := \{w\in\mathbb{R}^3\colon w\cdot v=0\}$.
In this case the Riemannian distance from a point $x$ to $S_v$ is
the angle between $x$ and $v$, that is,
\[
d(x,S_{{v}}) = \sin^{-1}(\left|x\cdot {v}\right|).
\]
Note that the great circle with axis $v$ consists of the intersection of $S^2$ and the plane with normal vector ${v}$. More generally, the totally geodesic submanifolds of $S^n$, viewed as submanifolds of $\mathbb{R}^{n+1}$, are precisely the intersections $S_N$ of $S^n$ with a given linear subspace $N$ of $\mathbb{R}^{n+1}$. 

\begin{lemma}
\label{lem:s1}
Let $N$ be a subspace of $\mathbb{R}^{n+1}$ and let  $\{{v}_1,\ldots,{v}_m\}$ be an orthogonal basis for $N^\perp$. Then the distance between $x\in S^n$ and $S_N:=S^n\cap N$ is
\[
d(x,S_N) = \sin^{-1}\left(\sqrt{\sum_i (x\cdot {v}_i)^2}\right).
\]
\end{lemma}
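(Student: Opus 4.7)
The plan is to reduce the distance computation to the standard fact that the closest point on $S_N$ to $x$ is the normalization of the orthogonal projection of $x$ onto $N$, and then convert the resulting cosine into the stated sine.

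First, I would use the orthogonal decomposition $\mathbb{R}^{n+1}=N\oplus N^\perp$ to write $x = x_N + x_{N^\perp}$, where, since $\{v_1,\dots,v_m\}$ is orthonormal,
\[
x_{N^\perp}=\sum_{i=1}^m (x\cdot v_i)\,v_i,\qquad \|x_{N^\perp}\|^2=\sum_{i=1}^m (x\cdot v_i)^2,\qquad \|x_N\|^2=1-\sum_{i=1}^m (x\cdot v_i)^2.
\]
(If the $v_i$ are merely orthogonal, rescale to unit length; the formula in the statement implicitly assumes unit vectors.)

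Next I would identify the nearest point on $S_N=S^n\cap N$ to $x$. Because $S_N$ consists of unit vectors, the Riemannian (arclength) distance is $d(x,y)=\arccos(x\cdot y)$, so minimizing the distance is equivalent to maximizing $x\cdot y$ over $y\in S_N$. For $y\in N$ we have $x_{N^\perp}\cdot y=0$, hence $x\cdot y = x_N\cdot y$. By Cauchy--Schwarz,
\[
x_N\cdot y \le \|x_N\|\,\|y\| = \|x_N\|,
\]
with equality precisely when $y = x_N/\|x_N\|$, provided $x_N\neq 0$. Thus the minimizer is $y^\star := x_N/\|x_N\|$ and
\[
\cos d(x,S_N) = x\cdot y^\star = \|x_N\| = \sqrt{1-\textstyle\sum_i (x\cdot v_i)^2}.
\]

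Finally, using $\sin\theta=\sqrt{1-\cos^2\theta}$ for $\theta\in[0,\pi/2]$ yields $d(x,S_N)=\sin^{-1}\bigl(\sqrt{\sum_i (x\cdot v_i)^2}\bigr)$, as claimed. I would handle the degenerate case $x_N=0$ separately: then $x\in N^\perp$, every $y\in S_N$ satisfies $x\cdot y=0$, so $d(x,S_N)=\pi/2$, consistent with the formula since $\sum_i (x\cdot v_i)^2=\|x\|^2=1$ and $\sin^{-1}(1)=\pi/2$.

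I do not anticipate a real obstacle; the only subtlety is the Cauchy--Schwarz step identifying the minimizer, which replaces a Lagrange multiplier computation by a one-line inequality. The assumption $N\neq\{0\}$ (so that $S_N$ is non-empty) is implicit.
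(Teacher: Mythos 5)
Your proof is correct and follows essentially the same route as the paper's: both rest on the orthogonal decomposition $x=x_N+x_{N^\perp}$ and the complementarity $\sin\theta=\cos\hat\theta=\|x_{N^\perp}\|$. You merely make explicit (via Cauchy--Schwarz) the identification of the nearest point that the paper asserts through the complementary-angle statement, and you rightly note that the ``orthogonal basis'' in the hypothesis must be read as orthonormal for the formula to hold.
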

\begin{proof}
The angle $\theta$ ($=d(x,S_N)$) between $x$ and $N$, and the angle $\hat\theta$ between $x$ and $N^\perp$, are complementary angles.
Likewise, the angle $\theta$ between $x$ and $S_N$ and the angle $\hat\theta$ between $x$ and $S^n\cap N^\perp$ are complementary angles. 
Let $\hat x$ be the orthogonal projection of $x$ to $N^\perp$, that is, $\hat x = \sum_{i=1}^m (x\cdot v_i) v_i$.
Then 
$$\sin\theta = \cos\hat\theta = \|\hat x\| = \sqrt{\sum_{i=1}^m(x\cdot v_i)^2}.$$
\end{proof}

We now make the obvious linearization of this distance so that best approximations may be determined using linear algebra. 
We call the {\em projection distance} $d_p(x,v)$ between $x$ and $v\in S^n$ the shortest Euclidean distance from $x$
to $\mathrm{span}(v)$ in $\mathbb{R}^{n+1}$. (Equivalently, from $v$ to $\mathrm{span}(x)$.) 

\begin{lemma}
Let $N$ be a subspace of $\mathbb{R}^{n+1}$ and let  $\{{v}_1,\ldots,{v}_m\}$ be an orthogonal basis for $N^\perp$. Then the projection distance of between $x\in S^n$ and $S_N$ is
\[
d_p(x,S_N) = \sqrt{\sum_{i=1}^m(x\cdot v_i)^2}.
\]
\end{lemma}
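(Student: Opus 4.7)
The plan is to reduce the minimization to a Euclidean-distance problem in $\mathbb{R}^{n+1}$ and then apply the orthogonal decomposition $\mathbb{R}^{n+1} = N \oplus N^\perp$. First I would adopt the natural extension $d_p(x, S_N) := \min_{y \in S_N} d_p(x, y)$, matching the convention used in Lemma \ref{lem:s1}.

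For a single unit vector $y$, the orthogonal projection of $x$ onto the line $\mathrm{span}(y)$ is $(x\cdot y)\,y$, so
\[
d_p(x, y)^2 = \|x - (x\cdot y)\,y\|^2 = 1 - (x\cdot y)^2.
\]
Hence $d_p(x, S_N)^2 = 1 - \max_{y\in S_N}(x\cdot y)^2$, and the task reduces to maximising $|x\cdot y|$ over unit vectors $y \in N$. Writing $P_N$ for orthogonal projection onto $N$, we have $x\cdot y = (P_N x)\cdot y$ for $y \in N$, so Cauchy--Schwarz gives $|x\cdot y| \leq \|P_N x\|$, with equality at $y = P_N x/\|P_N x\|$ (and when $P_N x = 0$, every $y \in S_N$ gives $x\cdot y = 0 = \|P_N x\|$, so the identity still holds).

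Finally, the Pythagorean identity $1 = \|x\|^2 = \|P_N x\|^2 + \|P_{N^\perp} x\|^2$ converts the result into $d_p(x, S_N)^2 = \|P_{N^\perp} x\|^2$. Since $\{v_1,\ldots,v_m\}$ is an orthonormal basis of $N^\perp$ (as was tacitly assumed in the proof of Lemma \ref{lem:s1} when writing $\hat x = \sum_i (x\cdot v_i) v_i$), we get $P_{N^\perp} x = \sum_i (x\cdot v_i) v_i$, whose squared norm is $\sum_i (x\cdot v_i)^2$. Taking square roots yields the claim. There is no substantive obstacle; the only minor points are handling the degenerate case $P_N x = 0$ (where $x \in N^\perp$ is equidistant from every point of $S_N$) and recognising that the $v_i$ must be taken to be unit length for the stated formula to hold verbatim.
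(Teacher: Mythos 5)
Your proof is correct and rests on the same geometric fact as the paper's: the projection distance to $S_N$ equals the norm of the orthogonal projection of $x$ onto $N^\perp$, computed in the (implicitly orthonormal) basis $\{v_1,\dots,v_m\}$. The paper simply cites the construction of Lemma \ref{lem:s1} with $\theta$ replaced by $\sin\theta$, whereas you make the minimization over $y\in S_N$ explicit via Cauchy--Schwarz and handle the degenerate case $P_N x=0$; this is a more self-contained rendering of the same argument, not a different one.
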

\begin{proof}
Let $\theta$ be the angle between $x$ and $v\in S^n$, that is, $\cos\theta = x\cdot v$. 
Then $d_p(x,v)=\sin\theta$. The same construction as in Lemma \ref{lem:s1}, except measuring
distances as $\sin\theta$ instead of $\theta$, gives the result.
\end{proof}
Note that the projection distance between two points is nonlinear; its use is favoured here because
it becomes linear when calculating distances to subspheres $S_N$.
The projection distance is compatible with the Riemannian distance.

\begin{proposition}
\label{prop:s}
Let $X$ be the matrix whose columns consists of the data points $x_i\in S^n$, where $S^n$ is identified with the unit sphere in $\mathbb{R}^{n+1}$. Then for any $m$ with $0<m<n$, the best approximating $(n-m)$-sphere in 
the projection distance is given by $S_N$, where $N$ is the the span of the singular vectors corresponding to the smallest $m$ singular values of $X\tran $.
\end{proposition}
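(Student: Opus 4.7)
The plan is to rewrite the objective as a quadratic form in an orthonormal frame for $N^\perp$, reducing the problem to a standard trace minimization that can be solved by the Ky Fan (Courant--Fischer) principle. I would parametrize an $(n-m)$-subsphere by choosing an orthonormal basis $\{v_1,\dots,v_m\}$ of $N^\perp\subset\R^{n+1}$, assembled as the columns of an $(n+1)\times m$ matrix $V$ satisfying $V\tran V=I_m$.

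Applying the preceding lemma and switching the order of summation yields
\[
d_p(X,S_N)^2 = \sum_{j=1}^d\sum_{i=1}^m (x_j\cdot v_i)^2 = \sum_{i=1}^m \|X\tran v_i\|^2 = \tr\bigl(V\tran (XX\tran) V\bigr),
\]
so minimizing $d_p(X,S_N)$ over $(n-m)$-subspheres is equivalent to minimizing $\tr(V\tran A V)$ over $(n+1)\times m$ matrices $V$ with $V\tran V=I_m$, where $A:=XX\tran$ is symmetric positive semidefinite. By the Ky Fan trace-minimization principle this infimum equals the sum of the $m$ smallest eigenvalues of $A$ and is attained when the columns of $V$ span the corresponding eigenspace.

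To finish, I would translate eigenvectors of $A$ into singular vectors of $X\tran$: the SVD $X\tran=U\Sigma W\tran$ gives $A=W(\Sigma\tran\Sigma)W\tran$, so the eigenvectors of $A$ are the columns of $W$ (i.e.\ the right singular vectors of $X\tran$) and the eigenvalues are the squared singular values of $X\tran$. Hence the optimal $V$ consists of the singular vectors of $X\tran$ associated with the $m$ smallest singular values, as claimed. The one non-routine ingredient is the Ky Fan principle itself, which I would either cite or establish in two lines via simultaneous diagonalization of $V$ and $A$; everything else is direct bookkeeping with the SVD and the distance formula from the preceding lemma.
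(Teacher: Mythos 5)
Your proposal is correct and follows essentially the same route as the paper: both reduce the objective to minimizing $\tr(V\tran XX\tran V)$ over orthonormal frames $V$ for $N^\perp$ and identify the minimizer with the singular vectors of $X\tran$ for the $m$ smallest singular values. The only difference is that you invoke the Ky Fan trace-minimization principle to finish, whereas the paper carries out the constrained stationarity analysis explicitly with a symmetric Lagrange multiplier $\Lambda$ and then diagonalizes it; these are two standard ways of dispatching the same final optimization.
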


\begin{proof}
Let $V=[v_1,\dots,v_m]\in\mathbb{R}^{n\times m}$. 
We have $d_p(x_j,S_N)^2=\sum_{i=1}^m (x_j\cdot v_i)^2$, and
thus \[
d_p(X,S_N)^2 = \sum_{i=1}^m\sum_{j=1}^d (x_j\cdot v_i)^2 = \|X\tran  V\|_F^2.\]
We seek to minimize $d_p(X,S_N)$ subject
to the constraint that $V$ is orthogonal. Introducing a Lagrange multiplier $\Lambda\in\mathbb{R}^{m\times m}$
for the constraint, where $\Lambda\tran =\Lambda$, we need to make
\[
\| X\tran  V \|_F^2 + \tr\Lambda(V\tran  V - I) = \tr(V\tran  X X\tran  V) + \tr\Lambda(V\tran  V- I)
\]
stationary in $W$. The variational equations are
\[
X X\tran  V = V \Lambda,\quad V\tran  V = I.
\]
At any solution to these equations, the objective function is $d_p(X,S_N)^2=\tr(V\tran  X X\tran  V) = \tr(V\tran  V\Lambda) = \tr(\Lambda)$.
Given any solution $(V,\Lambda)$ to these equations, orthogonally diagonalize $\Lambda=Z \Omega Z\tran $
where $Z\tran  Z = I$ and $\Omega$ is diagonal. Then $(WZ, \Omega)$ is also a solution. The value
of the objective function, $\tr \Lambda = \tr\Omega$, is the same for both solutions. Therefore,
we can take $\Lambda$ to be diagonal. Therefore, the stationary points are those for which
the columns of $V$ are eigenvectors of $XX\tran $ (that is, singular vectors of $X\tran $) and the diagonal entries of
$\Lambda$ are the associated eigenvalues of $X X\tran $ (that is, squares of the singular values of $X\tran $).
The minimum value of the objective function is obtained by taking the $m$ smallest singular values.
\end{proof}

Note that, in the sense of Euclidean PCA, if we regard the data as a set of
points in $\mathbb{R}^{n+1}$, the best approximating $(n+1-m)$-subspace is just the
span of the singular vectors associated with the $n+1-m$ {\em largest} singular values
of $X\tran $. That subspace is the orthogonal complement of the span of the singular
vectors associated with the $m$ smallest singular values, found in the proposition.
Thus in this case, the two approximations coincide (after intersecting with $S^n$).

The linearization of the distance function, considered here, reduces the calculation to linear algebra,
produces unique best approximations, and also provides the
nesting property shared by Euclidean PCA: the best $S^p$ lies inside
the best $S^m$ for $p<m$:

\begin{corollary}
\label{cor:s2}
The principal symmetric space approximation of $X$ with respect to $S^n$ is
the unbranched tree $S^n\supset S^{n-1}\supset\dots\supset S^1$, where each $S^m$ is
as determined in Proposition \ref{prop:s}.
\end{corollary}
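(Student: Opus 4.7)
The plan is to induct on the depth in the tree, reducing each step to Proposition \ref{prop:s} applied to the restricted subspace. Let $X\in\mathbb{R}^{(n+1)\times d}$ be the data matrix and let $u_1,\dots,u_{n+1}$ be an orthonormal system of left singular vectors of $X$, ordered so that the corresponding singular values satisfy $\sigma_1\le\dots\le\sigma_{n+1}$. By Proposition \ref{prop:s} with $m=1$, the unique best approximating $(n-1)$-sphere in $S^n$ is $S_{N_{n-1}}$ with $N_{n-1}=\mathrm{span}(u_2,\dots,u_{n+1})$, which identifies the first descendant of the root. I would write the best $k$-sphere as $S_{N_k}$ with $N_k:=\mathrm{span}(u_{n+1-k},\dots,u_{n+1})$, a subspace of $\mathbb{R}^{n+1}$ of dimension $k+1$.

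For the inductive step I would assume the current node is $S^m=S_{N_m}$ and compute $\mathcal{SSA}(X,S^m)$. Any totally geodesic submanifold of $S^m$ has the form $S_{N'}$ with $N'\subseteq N_m$. For a codimension-one choice, write $N_m=N'\oplus\mathrm{span}(v)$ with $\|v\|=1$; then an orthogonal basis of $N'^\perp$ in $\mathbb{R}^{n+1}$ is $\{u_1,\dots,u_{n-m},v\}$, and the calculation in the proof of Proposition \ref{prop:s} yields
\[
d_p(X,S_{N'})^2 = \sum_{i=1}^{n-m} \sigma_i^2 + \|X\tran v\|^2.
\]
The first sum is constant in the search, so the optimization reduces to minimizing $\|X\tran v\|^2$ over unit vectors $v\in N_m$. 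This is a constrained Rayleigh-quotient problem whose unique minimizer, under the genericity assumption that the singular values are distinct, is $v=u_{n+1-m}$ with minimum value $\sigma_{n+1-m}^2$; hence $N'=\mathrm{span}(u_{n+2-m},\dots,u_{n+1})=N_{m-1}$, which is precisely the best $(m-1)$-sphere in $S^n$ produced by Proposition \ref{prop:s}.

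The same calculation applied with codimension $k>1$ in $N_m$ shows that the best $(m-k)$-subsphere of $S^m$ removes the $k$ smallest singular directions available in $N_m$, namely $u_{n+1-m},\dots,u_{n-m+k}$; thus every such lower-dimensional element of $\mathcal{SSA}(X,S^m)$ is contained in the codimension-one child $S_{N_{m-1}}$. Consequently each node has a unique immediate child, and iterating from $m=n$ down to $m=1$ produces the asserted chain $S^n\supset S^{n-1}\supset\dots\supset S^1$.

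The principal difficulty I anticipate is notational rather than conceptual: one must track carefully how the indexing of singular directions telescopes between the ambient $\mathbb{R}^{n+1}$ and the descending nested subspaces $N_m$, and make precise the genericity assumption under which each best approximation is unique. When singular values coincide, minimization within the corresponding degenerate eigenspace of $XX\tran$ is determined only up to choice of orthonormal basis, but the tree still reduces to the claimed chain after such identifications.
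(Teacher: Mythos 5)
Your argument is correct and follows the same route as the paper: the corollary rests entirely on the SVD characterization of Proposition \ref{prop:s}, with the chain arising because the best $(n-m)$-sphere is obtained by deleting the $m$ smallest singular directions, and these deleted sets are nested. Your inductive step additionally verifies a point the paper leaves implicit --- that re-running the optimization \emph{within} a node $S_{N_m}$ (as the definition of $\mathcal{PSSA}$ literally requires) reproduces the globally optimal subspheres, since the contribution $\sum_{i=1}^{n-m}\sigma_i^2$ from the already-removed directions is constant --- which is a worthwhile clarification but not a different method.
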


As stated, we have restricted the dimension of the subspheres in Proposition \ref{prop:s} and Corollary
\ref{cor:s2} to be positive. If they are applied with $m=n$ to yield $0$-dimensional approximations,
they yield the {\em pair} of antipodal points that best approximates the data in the projective metric,
because $S_N$ is then disconnected. Depending on the application, this may be what is wanted.
Even if the best {\em single} point is wanted, the best such $S_N$ may still be a usefully good approximation
if the data is, in fact, strongly clustered around a single point. If the data is not strongly clustered,
and the best single point is wanted, then it may be necessary to switch to another
metric (e.g. the Riemannian metric) and calculate the best point within each
$S^m$ in Cor. \ref{cor:s2}, creating a branched tree of approximations.

\begin{example}\rm
\label{ex:1}
We present a sequence of 3 examples of synthetic datasets on $S^3$. Each contains 20 data points. In the first dataset, each $x_i$ is the projection of
a point in $N(0,\mathrm{diag}(1,1,0.1,0.05))$ to $S^3$. The data lies close
to the 2-sphere $x_4=0$ and even closer to the great circle $x_3=x_4=0$.  
The singular values of $X\tran $ were found to be $(0.3486,0.4095,3.0571,3.2195)$.
Thus the error of the best $S^2$ approximation to the data is 0.3486, and
the error of the best $S^1$ approximation (shown in Fig. \ref{fig:sphere1}, left) is
$0.5378$. 
The error of the best $S^0$ approximation is $3.1040$ and is clearly found
to be not relevant. Likewise, the Karcher mean is not relevant for this dataset.
 
In the second dataset, each $x_i$ is the projection of
a point in $N(0,\mathrm{diag}(1,0.3,0.1,0.05))$ to $S^3$.
Thus the data are more strongly clustered around the 0-sphere $x_1=\pm1,\ x_2=x_3=x_4=0$.
This is revealed in the singular values $(0.3339, 0.9568, 1.2380, 4.1762)$. The
error of the best $S^2$, $S^1$, and $S^0$ approximations are 0.3339, 1.0134, and 1.5998, respectively. The nested approximations are shown in Fig. \ref{fig:sphere1} (centre). 
The Karcher mean is not relevant for this dataset.

In the third dataset, each $x_i$ is the projection of $N([1,0,0,0],\mathrm{diag}([0,0.4,0.1,0.05])$
to $S^3$ and is thus clustered around the point $(1,0,0,0)$. 
The singular values are $(0.1712,0.3685,1.5513,4.1747)$.
The nested approximations are shown in Fig. \ref{fig:sphere1} (right). 
The Karcher mean (which here coincides with the best $S^0$, in the appropriate metric)
is relevant for this dataset.
\end{example}

\begin{figure}
\begin{center}
\includegraphics[width=4.3cm]{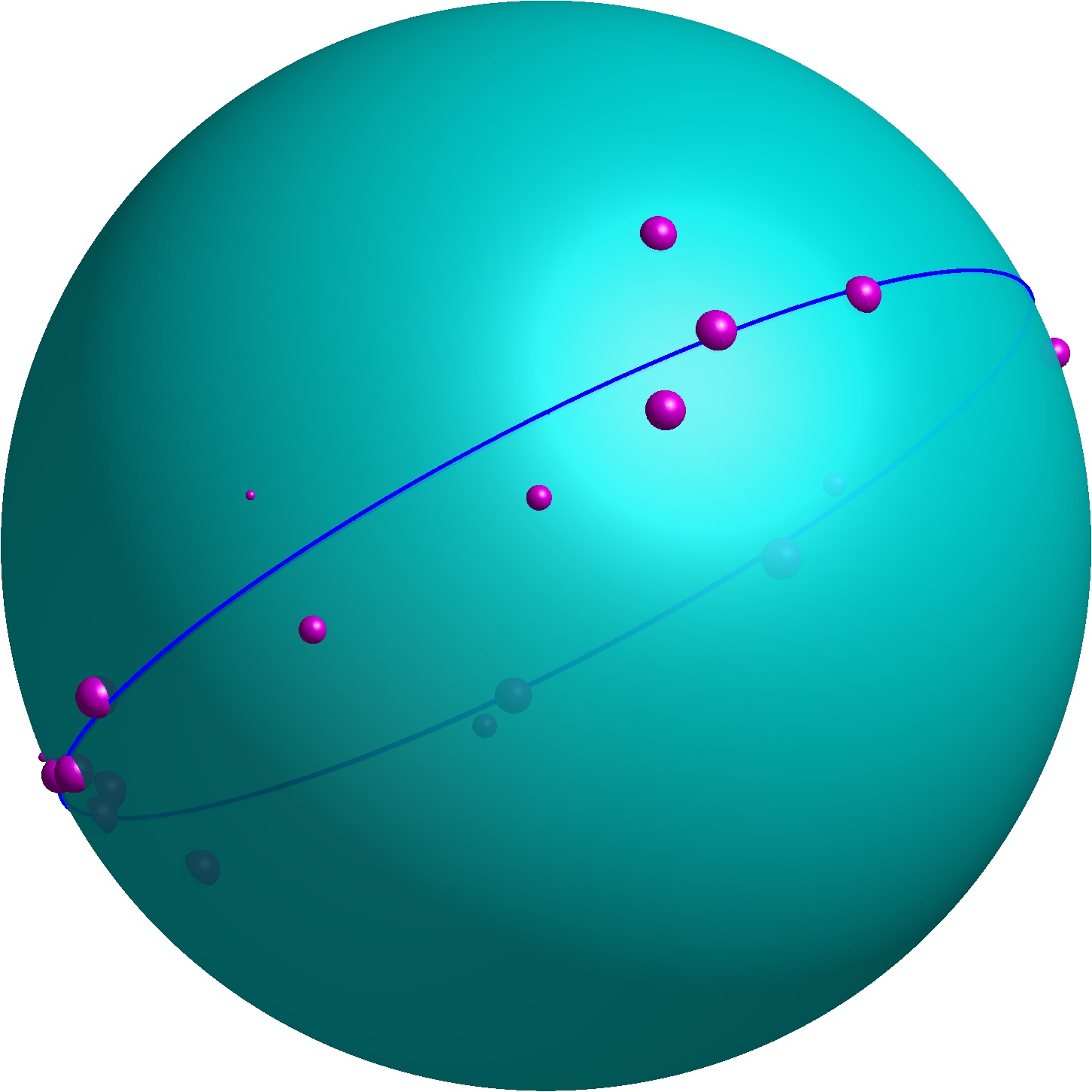}\hspace{5mm}
\includegraphics[width=4.3cm]{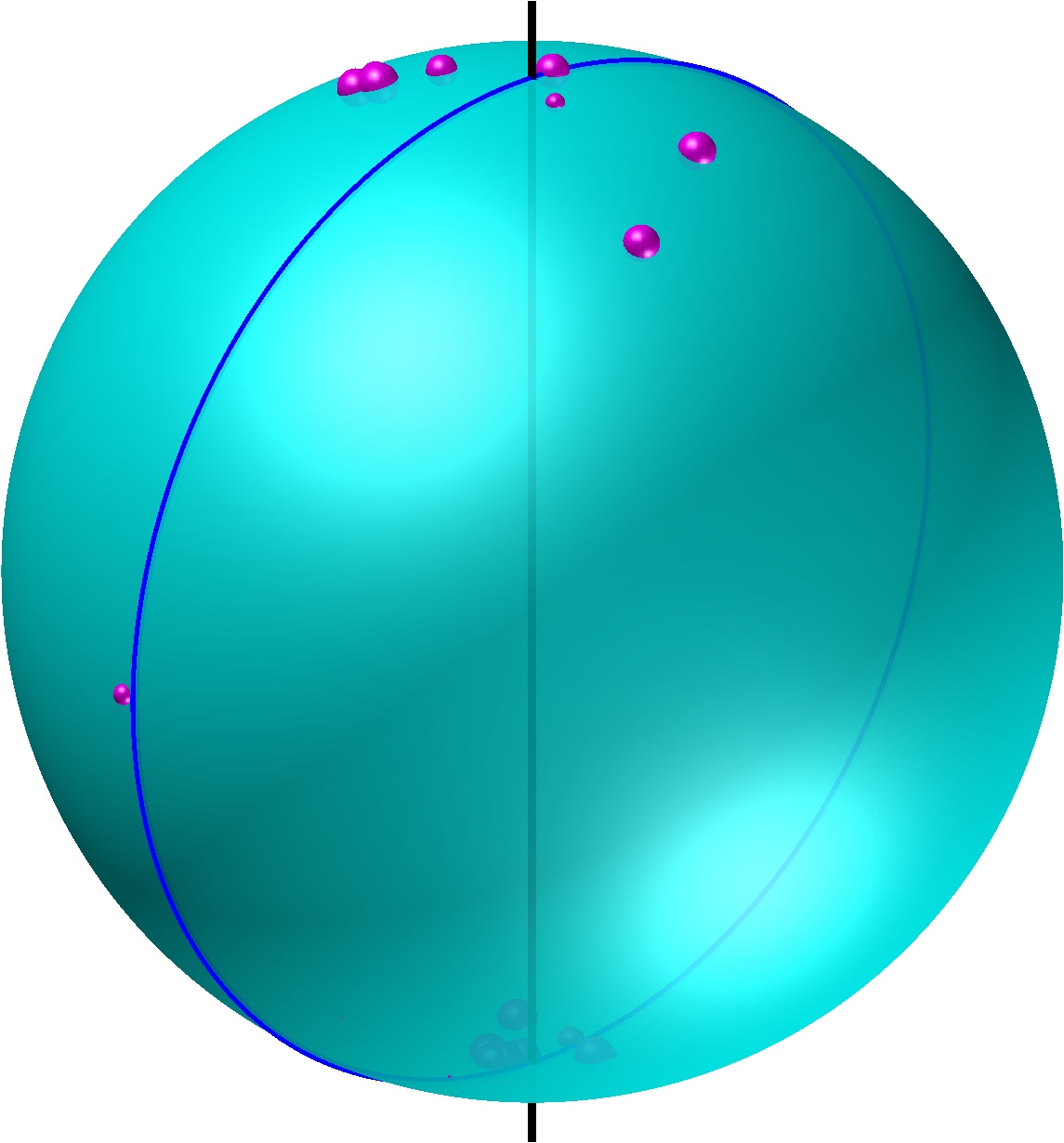}\hspace{5mm}
\includegraphics[width=4.3cm]{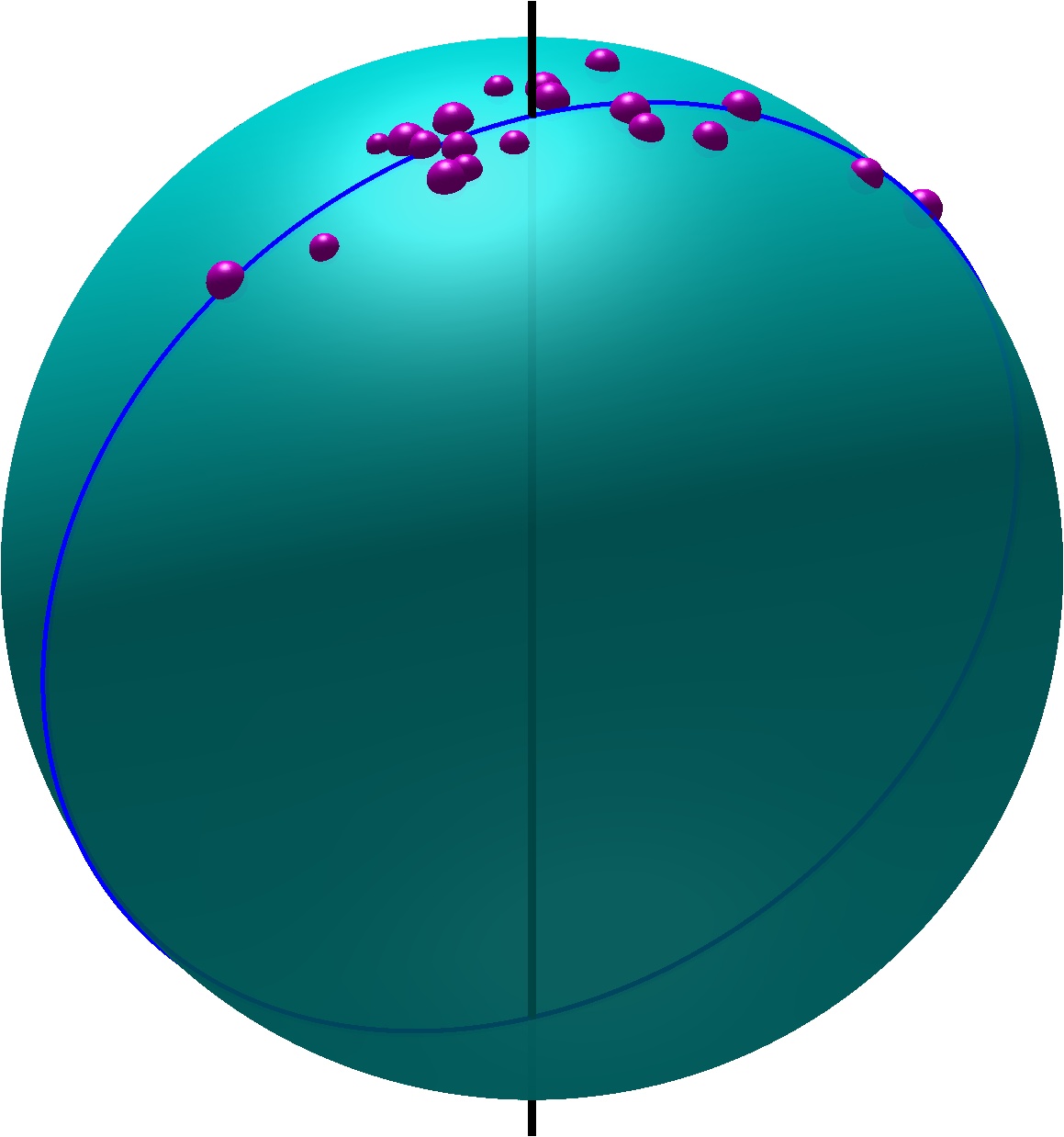}
\end{center}
\caption{\label{fig:sphere1}
Results for datasets 1--3 of Example \ref{ex:1}. In each case, the best subspheres that approximate a set of 20 points on $S^3$ is shown. Data points further from the best $S^2$ are shown smaller. The best $S^1$ is shown in blue, lying on the best $S^2$ in teal. In datasets 2 and 3, the axis
of the best $S^0$ (which consists of two antipodal points) is shown in black. 
In dataset 3, this also coincides with a standard mean of the data.
}
\end{figure}

%
%
%
%
%
%
%
%
%
%
%
%
%
%
%
%
%

\section{Grassmannians}
\label{sec:grass}
The Grassmannian  $G(k,n)$ of $k$-dimensional subspaces (or {\em $k$-planes}) of $\mathbb{R}^{n}$ is a symmetric space (see Sec. \ref{sec:symspace}). Data comprising subspaces may arise if we wish to track the eigenspace decomposition of  symmetric matrices such as diffusion tensors,
or if we collect a sequence of low-dimensional approximating subspaces
to Euclidean data using Euclidean PCA as some parameter (e.g. time) evolves.
 Related applications occur in computer vision and signal processing \cite{Gdata}. 


\def\Wspace{\mathsf{W}}
\def\Xspace{\mathsf{X}}
\def\Yspace{\mathsf{Y}}

The classification of geodesic submanifolds of Grassmannians is surprisingly complicated \cite{Klein}.
 Here we restrict our attention to a specific type of geodesic submanifold, namely the space of $k$-planes  orthogonal to a given subspace $\Wspace$ of $\mathbb{R}^n$.

\begin{lemma}
The space of $k$-planes in $\mathbb{R}^n$ orthogonal to a given $(n-m)$-dimensional subspace $\Wspace$
of $\mathbb{R}^n$ is a totally geodesic submanifold of $G(k,n)$ and is diffeomorphic to $G(k,m)$.
\end{lemma}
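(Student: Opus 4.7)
The plan is to use the Lie triple system characterization provided by the Theorem earlier in the paper. Fix a basepoint $V_0 \subset \Wspace^\perp$ of dimension $k$, which serves as the origin of the candidate submanifold, and decompose $\R^n = V_0 \oplus U \oplus \Wspace$ with $U := \Wspace^\perp \cap V_0^\perp$ of dimension $m-k$. Using the symmetric space structure $G(k,n) = O(n)/(O(k)\times O(n-k))$, the tangent space $\mathfrak{m}$ at $V_0$ consists of skew-symmetric matrices of block form
\[
\begin{pmatrix} 0 & B & C \\ -B\tran & 0 & 0 \\ -C\tran & 0 & 0 \end{pmatrix},\quad B\in\R^{k\times(m-k)},\ C\in\R^{k\times(n-m)},
\]
relative to the above splitting of $\R^n$. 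An infinitesimal variation of $V_0$ preserves orthogonality to $\Wspace$ exactly when the image of the corresponding map $V_0\to V_0^\perp$ lies in $U$, i.e.\ when $C=0$. Call this subspace $\mathfrak{m}'\subset \mathfrak{m}$.

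Next I would verify directly that $\mathfrak{m}'$ is a Lie triple system. A short block computation shows that $[\mathfrak{m}',\mathfrak{m}']$ sits in the block-diagonal subalgebra acting nontrivially only on $V_0\oplus U$ (the $\Wspace$-rows and columns vanish), and bracketing such an element with any $X(B,0)\in\mathfrak{m}'$ returns an element of the same form $X(B',0)$. Hence $[[\mathfrak{m}',\mathfrak{m}'],\mathfrak{m}']\subset \mathfrak{m}'$, and the Theorem produces a complete totally geodesic submanifold $M'\subset G(k,n)$ through $V_0$.

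To identify $M'$ with $G(k,m)$, I would use the construction $\mathfrak{h}'=[\mathfrak{m}',\mathfrak{m}']$, $\mathfrak{g}'=\mathfrak{h}'+\mathfrak{m}'$ provided in the remark after the Theorem. This $\mathfrak{g}'$ is precisely the Lie algebra of the block subgroup $O(m)\times\{I_{n-m}\}\subset O(n)$ acting on $\Wspace^\perp$, with isotropy $O(k)\times O(m-k)$ at $V_0$. Hence the symmetric subspace is $O(m)/(O(k)\times O(m-k)) = G(k,m)$, and the natural inclusion sending a $k$-plane in $\Wspace^\perp\cong\R^m$ to itself as a $k$-plane in $\R^n$ is a smooth embedding hitting $V_0$ with differential surjecting onto $\mathfrak{m}'$.

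The main obstacle is not the bracket computation, which is routine block-matrix arithmetic, but ensuring that the submanifold obtained from the Theorem is globally the full set of $k$-planes orthogonal to $\Wspace$, rather than merely a neighbourhood of $V_0$. This I would settle by invoking completeness from the Theorem together with connectedness of $G(k,m)$: the embedding above has image a connected complete totally geodesic submanifold tangent to $\mathfrak{m}'$ at $V_0$, so by uniqueness of such a submanifold it coincides with $M'$.
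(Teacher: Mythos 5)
Your proposal is correct and follows essentially the same route as the paper: both reduce the statement to exhibiting a Lie triple system inside $\mathfrak{m}$ via a block-matrix bracket computation, your condition $C=0$ in the splitting $V_0\oplus U\oplus\Wspace$ being exactly the paper's condition $X\tran R=0$. Your closing paragraph on completeness and connectedness is a slightly more careful treatment of the local-to-global step than the paper's one-line appeal to a choice of basis, but it is not a different method.
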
 

\begin{proof}
The symmetric algebra has canonical decomposition $\mathfrak{g}=\mathfrak{h} + \mathfrak{m}$, where \cite[Ex. XI.10.3]{KN}
\begin{align*}
\mathfrak{g}&=\mathfrak{o}(n),\\
\mathfrak{h} &= \mathfrak{o}(k) +\mathfrak{o}(n-k)\\
&=
\left\{h(U,V):=
\begin{pmatrix}
U & 0 \\
0 & V
\end{pmatrix},
U\in\mathfrak{o}(k), V\in\mathfrak{o}(n-k)
\right\},
\intertext{and}
\mathfrak{m}&=\left\{m(X):=
\begin{pmatrix}
0 & -X\tran  \\
X & 0
\end{pmatrix},
X\in\mathbb{R}^{(n-k)\times k}
\right\}.
\end{align*}
 A geodesic connecting two points on a Grassmannian may be characterized as a linear interpolation of each principal angle. Fix an orthogonal basis $(e_1,\ldots,e_k)$ of the subspace at the origin, and extend this to an orthogonal basis $\{e_i\}$ of $\mathbb{R}^n$. Then any set of $k$ orthogonal vectors  orthogonal to $(e_1,\ldots,e_k)$ may be written as 
 \[
 \begin{pmatrix}
 0_{k\times k} \\ X\end{pmatrix}
 \]
 with respect to the basis $\{e_i\}$, where $X\in\mathbb{R}^{(n-k)\times k}$.
The geodesic connecting $X$ to the origin, intersecting $X$ at time $t=1$, is
$\exp(t m(X)).o$.  In particular, suppose $\Wspace$ is an $(n-m)$-dimensional subspace of $\mathbb{R}^n$ orthogonal to $(e_1,\dots,e_k)$. Then any orthogonal basis $(w_1,\ldots,w_{n-m})$ for $\Wspace$ may written as 
 \[
 \begin{pmatrix}
 0_{k\times (n-m)} \\ R\end{pmatrix}
 \]
 with respect to the basis $\{e_i\}$, where $R\in\mathbb{R}^{(n-k)\times (n-m)}$.
Then the geodesic  $\{\exp(t m(X)).o, t\in\mathbb{R}\}$ consists of subspaces orthogonal to $\Wspace$ if and only if $X\tran  R=0$. It therefore suffices to show that for any $R$, the subspace
\[
\mathfrak{m}' = \left\{
m(X), X\in\mathbb{R}^{(n-k)\times k}, X\tran  R = 0
\right\}
\]
defines a Lie triple system. A  calculation shows that
\[
[[m(X),m(Y)],m(Z)] = Z\tran  Y X\tran  - Z\tran  X Y\tran  - Y\tran  X Z\tran  + X\tran  Y Z\tran 
\]
and we are done as $X\tran  R=Y\tran  R= Z\tran  R = 0$ implies $(Z\tran  Y X\tran  - Z\tran  X Y\tran  - Y\tran  X Z\tran  + X\tran  Y Z\tran )R = 0$. 

Fixing an orthogonal basis of $\Wspace$, extending this to an orthogonal basis of $\mathbb{R}^n$, and expressing subspaces orthogonal to $\Wspace$ in terms of this basis gives the required diffeomorphism.
\end{proof}

Let the columns of the matrix $W\in\mathbb{R}^{n\times (n-m)}$ be
an orthonormal basis for the subspace $\Wspace$. Let $X$, $Y\in \mathbb{R}^{n\times k}$ be orthonormal bases for two elements $\Xspace$, $\Yspace$ of $G(k,n)$. 
The relationship between $\Xspace$ and $\Yspace$ is measured by their {\em principal angles} $\theta = (\theta_1,\dots,\theta_k)$,
defined by $\cos\theta_k = \sigma_k(X\tran Y)$. The geodesic distance between $\Xspace$ and $\Yspace$ in 
the Riemannian symmetric space $G(k,n)$ is $\|\theta\|_2$. Another popular measure of
distance between subspaces is $\max_k\theta_k$ \cite[p. 584]{golub}. However,
like Conway et al. \cite{conway}, we find that it is far easier and more natural to 
use the ``chordal distance'' $\|\sin\theta\|_2$ (so named because when equipped with this
metric, $G(k,n)$ isometrically embeds in a Euclidean sphere). The chordal and geodesic
metrics are compatible and thus have the same totally geodesic subspaces. 

In the present context, the advantage of the chordal distance is that it linearizes the calculation
of the distance from a $k$-plane to a totally geodesic submanifold.

\begin{lemma}
The chordal distance between two subspaces $\Xspace,\Yspace\in G(k,n)$ is given by  $||X\tran  Y^\perp||_F$, where $Y^\perp$ is the orthogonal complement of $Y$.
\end{lemma}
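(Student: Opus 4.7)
The plan is to exploit the orthogonal decomposition $\mathbb{R}^n = \Yspace \oplus \Yspace^\perp$ and the fact that the Frobenius norm is invariant under multiplication by an orthogonal matrix. Specifically, I would form the $n\times n$ matrix $[Y\ Y^\perp]$ whose columns are an orthonormal basis for $\mathbb{R}^n$, so $[Y\ Y^\perp]$ is orthogonal. Then
\[
\|X\tran [Y\ Y^\perp]\|_F^2 = \|X\tran\|_F^2 = \|X\|_F^2 = k,
\]
because $X$ has $k$ orthonormal columns. Splitting the left-hand side by columns gives the Pythagorean identity
\[
\|X\tran Y\|_F^2 + \|X\tran Y^\perp\|_F^2 = k.
\]

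Next I would relate $\|X\tran Y\|_F$ to the principal angles. Using the definition $\cos\theta_i = \sigma_i(X\tran Y)$ and the fact that the squared Frobenius norm of a matrix equals the sum of squares of its singular values, we have
\[
\|X\tran Y\|_F^2 = \sum_{i=1}^k \sigma_i(X\tran Y)^2 = \sum_{i=1}^k \cos^2\theta_i.
\]
Substituting back yields
\[
\|X\tran Y^\perp\|_F^2 = k - \sum_{i=1}^k \cos^2\theta_i = \sum_{i=1}^k \sin^2\theta_i = \|\sin\theta\|_2^2,
\]
so $\|X\tran Y^\perp\|_F = \|\sin\theta\|_2$, which is the chordal distance.

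There is no real obstacle here; the only subtlety worth remarking on is that the quantity $\|X\tran Y^\perp\|_F$ is intrinsic (independent of the choice of orthonormal bases $X$ and $Y^\perp$ spanning $\Xspace$ and $\Yspace^\perp$), which follows from the fact that replacing $X$ by $XQ_1$ and $Y^\perp$ by $Y^\perp Q_2$ for orthogonal $Q_1,Q_2$ preserves the Frobenius norm. With that observation the computation above is basis-free, and the identification of the singular values of $X\tran Y$ with cosines of principal angles is exactly the standard definition used in the statement.
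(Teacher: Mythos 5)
Your proof is correct and follows essentially the same route as the paper's: both use the orthogonal matrix $[Y\,|\,Y^\perp]$ to get the Pythagorean identity $\|X\tran Y\|_F^2 + \|X\tran Y^\perp\|_F^2 = k$ and then identify $\|X\tran Y\|_F^2$ with $\sum_i\cos^2\theta_i$ via the singular values. The remark on basis-independence is a nice addition but not needed for the argument.
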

\begin{proof}
The squared chordal distance $d_c(\Xspace,\Yspace)^2$ is
$$\sum_{i=1}^k \sin^2\theta_i = k - \sum_{i=1}^k \cos^2\theta_i = 
 k - \sum_{i=1}^k \sigma_i^2(X\tran Y) = k - \|X\tran  Y\|_F^2.$$
The $n\times n$ matrix $Q=[Y | Y^\perp]$ is orthogonal, so we have
$$\|X\tran  Q\|_F^2 = \tr(Q\tran  X X\tran  Q) = \tr(X X\tran ) = k$$
and also
$$\|X\tran  Q\|_F^2 = \|X\tran [Y | Y^\perp]\|_F^2 = \|X\tran  Y\|_F^2 + \|X\tran  Y^\perp\|_F^2.$$
Therefore
$$ d_c(\Xspace,\Yspace)^2 = k - \|X\tran  Y\|_F^2 = \|X\tran  Y^\perp\|_F^2.$$
\end{proof}

An immediate consequence is the following
\begin{proposition}
The chordal distance from a subspace $\Xspace\in G(k,n)$ to the set $G(k,m)$ of $k$-planes
orthogonal to $W\in\mathbb{R}^{n,n-m}$ is $\|X\tran  W\|_F^2$.
\end{proposition}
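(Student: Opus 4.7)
The plan is to invoke the preceding lemma, which gives $d_c(\Xspace,\Yspace)^2 = \|X\tran Y^\perp\|_F^2$, and then to minimize this quantity as $\Yspace$ ranges over the specified totally geodesic submanifold. The key observation is that every $k$-plane $\Yspace$ orthogonal to $\Wspace$ satisfies $\Wspace \subset \Yspace^\perp$, so the columns of $W$ can always be completed to an orthonormal basis $[W \mid W']$ of $\Yspace^\perp$, where the columns of $W'$ form an orthonormal basis of the $(m-k)$-dimensional space $\Yspace^\perp \cap \Wspace^\perp$.

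Using this decomposition, the squared chordal distance splits orthogonally:
\[
d_c(\Xspace,\Yspace)^2 \;=\; \|X\tran W\|_F^2 \,+\, \|X\tran W'\|_F^2.
\]
Only the second term depends on $\Yspace$, so the proposition reduces to showing that $\|X\tran W'\|_F^2$ can be driven to zero by a suitable choice of $\Yspace \in G(k,m)$.

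For this, the dimension count
\[
\dim(\Wspace^\perp \cap \Xspace^\perp) \;=\; n - \dim(\Wspace+\Xspace) \;\geq\; n-(n-m)-k \;=\; m-k
\]
permits the choice of an $(m-k)$-dimensional subspace $\mathsf{Z} \subset \Wspace^\perp \cap \Xspace^\perp$. Declaring $\Yspace$ to be the orthogonal complement of $\mathsf{Z}$ inside $\Wspace^\perp$ yields a $k$-plane in $G(k,m)$ with $\Yspace^\perp \cap \Wspace^\perp = \mathsf{Z}$, so $X\tran W' = 0$ and the lower bound $\|X\tran W\|_F^2$ is attained.

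The only delicate part is keeping track of the two distinct orthogonal complements in play, one inside $\mathbb{R}^n$ and the other inside $\Wspace^\perp$; the argument is otherwise elementary. Intrinsically the optimal $\Yspace$ is any $k$-plane in $\Wspace^\perp$ containing the projection $P_{\Wspace^\perp}\Xspace$, which accords with the intuition that the best approximating $k$-plane in $\Wspace^\perp$ should absorb as much of $\Xspace$ as possible.
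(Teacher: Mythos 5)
Your proposal is correct and follows essentially the same route as the paper: split $\Yspace^\perp$ as $[W \mid W']$, observe that only the $\|X\tran W'\|_F^2$ term varies with $\Yspace$, and use the dimension count on $\Wspace^\perp \cap \Xspace^\perp$ to exhibit a $\Yspace \in G(k,m)$ killing that term. Your version is marginally more explicit about reconstructing the optimal $\Yspace$ from the chosen $(m-k)$-dimensional complement, and your bound $\dim(\Wspace^\perp\cap\Xspace^\perp)\ge m-k$ is exactly the quantity needed, but the argument is the same.
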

\begin{proof}
We consider the cases $k=m$ and $k<m$ separately. If $k=m$ then $G(k,m)$ is a single point, $Y^\perp = W$, and we are done. If $k<m$, then 
an orthogonal basis for the orthogonal complement $\Yspace^\perp$ of any $\Yspace$ orthogonal to $\Wspace$ may be written as $[W|U]$ for some $U\in \mathbb{R}^{n,m-k}$ that satisfies $U\tran  W = 0$. We have 
$$d_c(\Xspace,\Yspace)^2 = \|X\tran  Y^\perp\|_F^2 = \|X\tran  W\|_F^2 + \|X\tran  U\|_F^2.$$
This is to be minimized over all choices of orthogonal $U$ that are also orthogonal to $W$. 
Any $U$ that is orthogonal to both $W$ and $X$ achieves $\|X\tran  U\|_F=0$, giving the result.
As we have
\begin{align*}
\dim(\Xspace\cup \Wspace)^\perp&=n - (\dim \Xspace + \dim \Wspace - \dim(\Xspace\cap \Wspace))\\
& = n-(k + (n-m) - \dim(\Xspace\cap \Wspace))\\
&= m-k+\dim(\Xspace\cap \Wspace) \\
&>0,
\end{align*}
we can choose such a $U$.
\end{proof}

As in the case of spheres,  the best approximating Grassmannians can now be read off from the SVD of a matrix representing from the dataset.

\begin{proposition}
Let $\Xspace_1,\dots,\Xspace_d$ be a set of $d$ k-planes  with 
orthogonal bases $X_1,\dots,X_d$.
Then the matrix $W$ minimizing the sum of squared chordal distances of the $\Xspace_i$ to ${\Wspace}$ is precisely the matrix of singular vectors of $X\tran $ corresponding to its $p$ smallest  singular values, where  $X=[X_1,\dots,X_d]\in\mathbb{R}^{n\times kd}$ is the matrix obtained by concatenating the $X_i$s.
The chordal distance of the $\Xspace_i$ to ${\Wspace}$ is the 2-norm of the $p$ smallest singular values of $X\tran $. The principal
symmetric space approximations are nested, in that the best $G(k,p)$ lies in the best $G(k,q)$ for
$p\le q$.
\end{proposition}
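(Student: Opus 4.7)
The plan is to reduce the problem to the same Lagrange-multiplier calculation that appears in the proof of Proposition \ref{prop:s} for the sphere. By the preceding proposition, the squared chordal distance of $\Xspace_i$ to the totally geodesic submanifold $G(k,m)\subset G(k,n)$ determined by $\Wspace$ is $\|X_i\tran W\|_F^2$, where $W\in\mathbb{R}^{n\times p}$ is an orthonormal basis for $\Wspace$ (so $p=n-m$, and $W\tran W=I_p$). Summing over the data and recognising the concatenated matrix $X=[X_1,\dots,X_d]$, I would write
\[
\sum_{i=1}^d d_c(\Xspace_i,\Wspace)^2 = \sum_{i=1}^d\tr(W\tran X_i X_i\tran W) = \tr\bigl(W\tran XX\tran W\bigr) = \|X\tran W\|_F^2.
\]
So the problem is to minimise $\|X\tran W\|_F^2$ over $W\in\mathbb{R}^{n\times p}$ subject to $W\tran W=I_p$.

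Next, introducing a symmetric Lagrange multiplier $\Lambda\in\mathbb{R}^{p\times p}$ for the orthonormality constraint exactly as in Proposition \ref{prop:s}, the stationarity condition is $XX\tran W = W\Lambda$ with $W\tran W=I_p$. The value of the objective at a stationary point is $\tr(W\tran XX\tran W) = \tr(\Lambda)$. Diagonalising $\Lambda = Z\Omega Z\tran$ and replacing $W$ by $WZ$ (a transformation that preserves the constraint, the objective, and $\Wspace=\mathrm{span}(W)$) reduces to the case where $\Lambda$ is diagonal. Thus the columns of $W$ are orthonormal eigenvectors of $XX\tran$ with eigenvalues equal to the diagonal of $\Lambda$, i.e.\ the columns are left singular vectors of $X$ (equivalently singular vectors of $X\tran$), and the diagonal of $\Lambda$ consists of the corresponding squared singular values. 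The minimum is attained by choosing the $p$ smallest squared singular values, and the minimum chordal distance is the 2-norm of the $p$ smallest singular values of $X\tran$, as claimed.

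For the nesting claim, suppose $p\le q$ and let $W_p,W_q$ be optimal bases for the best $G(k,p)$ and $G(k,q)$ respectively. Then $W_p$ collects the singular vectors for the $n-p$ smallest singular values, and $W_q$ collects those for the $n-q$ smallest. Since $n-q\le n-p$, the set of vectors chosen for $W_q$ is a subset of the set chosen for $W_p$, so $\Wspace_q\subseteq\Wspace_p$, whence the orthogonal complement containment reverses to $\Wspace_p^{\perp}\subseteq\Wspace_q^{\perp}$, which is exactly the statement that the best $G(k,p)$ sits inside the best $G(k,q)$ as a totally geodesic submanifold of $G(k,n)$.

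The only delicate point I anticipate is the non-genericity caused by repeated singular values: the individual singular vectors are then not unique, so the optimising $\Wspace_p$ is not unique. However, the nesting statement still holds because one may always choose the singular basis for the smaller $(n-p)$-dimensional eigenspace collection to extend the one chosen for the $(n-q)$-dimensional collection; the scalar objective value is independent of this choice. I would remark on this briefly after the main argument rather than treat it as a separate case.
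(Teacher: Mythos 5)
Your proof is correct and follows essentially the same route as the paper's: both reduce the sum of squared chordal distances to $\|X\tran W\|_F^2$ and then invoke the Lagrange-multiplier/SVD argument of Proposition \ref{prop:s}. You simply write out that argument in full (the paper cites it) and add a welcome explicit treatment of the nesting claim and of the degenerate case of repeated singular values, which the paper leaves implicit.
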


\begin{proof}
The sum of the squared chordal distances  is
\[
d(X,{\Wspace})^2=\sum_{i=1}^d d(X_i,{\Wspace})^2
= \sum_{i=1}^d \|X_i\tran W\|_F^2=\|X\tran W\|_F^2.
\]
This expression is formally identical to that studied in Proposition \ref{prop:s}, hence the result follows as in Proposition \ref{prop:s}.
\end{proof}

\section{Products of spheres}
\label{sec:products}
Given two symmetric spaces $(G,H,\sigma)$ and $(G',H',\sigma')$, the direct product $(G\times G',H\times H',\sigma\times\sigma')$ is also a symmetric space \cite[p. 228]{KN}. The simplest
case to consider is that of products of spheres, $S^{a_1}\times S^{a_2}\times\dots$. Here we focus on products of $n$ circles giving the $n$-torus (Sec. \ref{sec:tori}) and products of $n$ 2-spheres (Sec. \ref{sec:polysphere}). 

In the previous examples, of spheres and Grassmannians, the action of the symmetry group
was transitive. Our task was limited to selecting, from the single group orbit available, the
best point (or points). On products of spheres, the action of the symmetry group is not transitive; there are many (even infinitely many) distinct orbits. In fitting models with both continuous and discrete parameters, one common approach is to consider each value of the discrete parameters as specifying a different model; which value is chosen then corresponds to a model selection problem. This is the approach adopted here. We note that in the Bayesian paradigm model selection arises naturally through the choice of prior; however we will not pursue this further here.

Two examples illustrate the complexity of the situation. 

First, consider data consisting
of $n$ angles, i.e. $x\in\T^n$. Totally geodesic submanifolds are subtori
described by resonance relations of the form $a\cdot x = c$, $a\in\mathbb{Z}^n$, $c\in\mathbb{R}$. Each fixed $a$ specifies a different resonance relation, while the continuous parameter $c$ selects the best model for a given discrete parameter $a$.

Second, consider data consisting of $n$ points on $S^2$, i.e. spherical polygons. Totally geodesic manifolds are products of copies of $S^1$ and $S^2$. An example is $x_1$ lying on a great circle; $x_2$  lying on a second great circle, and obeying a resonance relation with $x_1$; $x_3$ arbitrary; and  $x_4,\dots,x_n$ being rotations of a fixed spherical polygon. 

\subsection{Tori}
\label{sec:tori}
\subsubsection{Classification of totally geodesic submanifolds of tori}
The product $(S^1)^n $ is a symmetric space that we identify with the flat torus $\T ^n := (\mathbb{R}/\mathbb{Z})^n$ with standard coordinates $x\in [0,1)^n$.
The connected  totally geodesic submanifolds of $\mathbb{R}^n$ are the
 affine subspaces; taking their translations by $\mathbb{Z}^n$ and passing to the quotient gives the connected totally
geodesic submanifolds of $\T ^n$. Amongst these, we wish to select those that are regular submanifolds.
We will describe them by the resonance relations that they satisfy.
The group $\T ^n$ acts by translations on $\T ^n$ and leaves the resonance relations invariant; we regard the submanifolds that satisfy different resonance relations as belonging to different models. Thus, the problem of finding the principal symmetric space approximations to given data involves first fixing the resonance relation and then
determining the best fitting submanifold that obeys that resonance relation.

We will show in Proposition \ref{prop:subtori} that the regular connected totally geodesic submanifolds of $\T ^n$ are all tori.
Up to translations, they are parameterized by unimodular matrices $A\in\mathbb{Z}^{k\times n}$,
i.e., matrices with integer entries all of whose $k\times k$ minors do not have a common factor
(their greatest common divisor is equal to 1). Specifically, they have the form
\begin{equation}
\label{eq:T}
T :=\{ x\in \T ^n\colon A x  = c\}
\end{equation}
for some $c\in[0,1)^n$. 

\begin{example}\rm
The case of geodesics in $\T ^2$ gives a feel for the requirement that the submanifold be represented by an unimodular matrix $A$. 
(i) The subset $x_1 + \sqrt{2}x_2=0$ of $\T ^2$, associated with $A=[1,\sqrt2]$, 
is totally geodesic, but it is an irregular submanifold. It is useless for data fitting as it passing arbitrarily close to every point of the torus.
(ii) The subset $2x_1=0$ of $\T ^2$, associated with $A=[2,0]$, consists of the two vertical lines $(0,y)$ and $(\frac{1}{2},y)$ for $0\le y<1$. This 
set is a regular totally geodesic submanifold, but it is not connected---and $A$ is not unimodular.
(iii) The subset $2x_1+5x_2=c$, associated with the unimodular matrix $A=[2,5]$, is
a regular, connected, totally geodesic submanifold of $\T ^2$. We will give
an example of fitting such a geodesic below.
\end{example}

\begin{proposition} \label{prop:subtori}\cite{lawrence} Every regular connected codimension-$k$ totally geodesic submanifold of $\T ^n$ is
a subtorus given by Eq. (\ref{eq:T}) for some $c\in[0,1)^k$ and unimodular $A\in \mathbb{Z}^{k\times n}$.
\end{proposition}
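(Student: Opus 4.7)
My plan is to translate the problem into lattice theory. Any codimension-$k$ connected totally geodesic submanifold of $\T^n$ is, by the observation preceding the proposition, the image under the covering $\pi\colon\mathbb{R}^n\to\T^n$ of an affine $(n-k)$-plane $L=x_0+V$ of $\mathbb{R}^n$, so I have to extract from geometric hypotheses on $T=\pi(L)$ the existence of a unimodular $A\in\mathbb{Z}^{k\times n}$ and a $c\in[0,1)^k$ for which $T=\{x\in\T^n:Ax=c\}$.

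The first step is to deduce from regularity of $T$ that $V$ is rational. Since $T$ is closed in $\T^n$, the preimage $\pi^{-1}(T)=L+\mathbb{Z}^n$ is closed in $\mathbb{R}^n$; equivalently, the image of $\mathbb{Z}^n$ under the projection $p\colon\mathbb{R}^n\to\mathbb{R}^n/V\cong\mathbb{R}^k$ is a discrete subgroup of $\mathbb{R}^k$. A discrete, finitely generated subgroup of $\mathbb{R}^k$ is free of rank at most $k$, and since $\ker(p|_{\mathbb{Z}^n})=V\cap\mathbb{Z}^n$ and $\mathrm{rank}(\mathbb{Z}^n)=n$, this forces $\mathrm{rank}(V\cap\mathbb{Z}^n)=n-k=\dim V$. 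Hence $V$ is spanned by integer vectors, i.e.\ rational.

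Next I extract $A$. Let $\Lambda:=\{\lambda\in\mathbb{Z}^n:\lambda\cdot v=0\text{ for all }v\in V\}$. Rationality of $V$ gives $\Lambda$ rank $k$, and $\Lambda$ is automatically primitive in $\mathbb{Z}^n$: if $r\lambda\in\Lambda$ with nonzero $r\in\mathbb{Z}$ and $\lambda\in\mathbb{Z}^n$, then $r(\lambda\cdot v)=0$ for all $v\in V$, whence $\lambda\in\Lambda$. Take any $\mathbb{Z}$-basis $\lambda_1,\dots,\lambda_k$ of $\Lambda$ as the rows of $A$. By the Smith normal form the gcd of the $k\times k$ minors of $A$ equals the product of the elementary divisors of $\mathbb{Z}^n/\Lambda$, which is $1$ since $\mathbb{Z}^n/\Lambda$ is torsion-free. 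Hence $A$ is unimodular, and $\ker A=V$ by construction.

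Finally, set $c:=Ax_0\bmod\mathbb{Z}^k\in[0,1)^k$. The inclusion $T=\pi(L)\subset\bar A^{-1}(c)$ is immediate, where $\bar A\colon\T^n\to\T^k$ is the homomorphism induced by $A$. Unimodularity gives $A\mathbb{Z}^n=\mathbb{Z}^k$, from which $\ker\bar A=\pi(V)$: if $Ax\in\mathbb{Z}^k$ then there is $z\in\mathbb{Z}^n$ with $Az=Ax$, so $x-z\in V$ and $x+\mathbb{Z}^n=(x-z)+\mathbb{Z}^n\in\pi(V)$. Since $\pi(V)$ is a connected subtorus of dimension $n-k$, every fiber of $\bar A$ is a connected translate of it. As $T$ is connected, of dimension $n-k$, and contained in $\bar A^{-1}(c)$, the two agree. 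The main obstacle is the regularity-to-rationality step of paragraph two---turning a purely topological hypothesis into an algebraic integrality statement on $V$---since once $V$ is rational, the matrix $A$ essentially writes itself via Smith normal form.
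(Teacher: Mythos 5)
Your argument is correct and follows essentially the same route as the paper's converse direction---identifying the integer annihilator lattice of the direction space $V$ and taking a $\mathbb{Z}$-basis of it as the rows of $A$---while usefully filling in the two steps the paper only asserts: that regularity forces $p(\mathbb{Z}^n)$ to be discrete and hence $V$ to be rational, and that saturation of the annihilator together with the Smith normal form yields unimodularity. The only difference is that the paper's proof also establishes the forward implication (that every unimodular $A$ and $c\in[0,1)^k$ do define a regular connected totally geodesic subtorus), which your proof, addressing only the literal statement, omits.
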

\begin{proof}
First let $A$ be unimodular. We
will show that $T$ in Eq. (\ref{eq:T}) is a regular connected codimension-$k$ totally geodesic submanifold and is a subtorus.
Rows can be added to $A$  to create a matrix, $C$, of determinant 1 \cite{smith}.
The linear map 
$$\phi\colon\R^n\to\R^n,\quad  \hat x\mapsto C\hat x$$ is invertible, therefore the map $\hat x \mapsto A \hat x  $ is surjective.
Let $x\in T$ and let $\hat x$ be any point in $\R^n$ such that $\hat x \mod 1 = x$.  
We are given that 
$A \hat x = c + m$ for some $m\in\mathbb{Z}^n$. From the surjectivity of $\hat x\mapsto A \hat x$, there
is a $p\in\mathbb{Z}^n$ such that $A p = m$. Therefore
$A(\hat x-p) = c$. That is, some integer translation of $\hat x$ lies on the affine subspace
$\{\hat x\in\R^n\colon A \hat x = c\}$, which is the cover of a connected totally geodesic submanifold of $\T^n$.
Hence $T$ is a totally geodesic submanifold of $\T ^n$.

The map $\phi$ descends to an automorphism of $\T ^n$; it  provides a change of coordinates on $\T ^n$. In coordinates $y = C x$, the submanifold
is given by $y_1 = c_1,\dots,y_k=c_k$, with $y_{k+1},\dots,y_n\in [0,1)^{n-k}$. This submanifold is a connected regular submanifold of $\T ^n$ and is a subtorus.

To show the converse, let $T$ be any regular connected totally geodesic submanifold of $\T ^n$.
Its translation $U$ to the origin is a subgroup, hence a subtorus of $\T ^n$. The kernel
of the exponential map of the Lie algebra of $U$ is a lattice in $\mathbb{Z}^n$. Form a matrix
whose rows are a basis of this lattice. The null space of this matrix has a unimodular integer basis whose entries
are the resonance relations satisfied by elements of $U$ and $T$. These form the rows of $A$.
\end{proof}

The matrix $A$ describes the resonance relations satisfied by the subtorus. If $a_i\cdot x \mod 1 = c_i$ for all $i$, then for any $m_i\in\mathbb{Z}$ we have 
$(\sum_{i=1}^k m_i a_i) \cdot x \mod 1 = c_i$ as well. That is, the set of resonance
relations forms a $k$-dimensional lattice $L$ in $\mathbb{Z}^n$, with the rows of $A$ as a basis.
Two matrices $A$, $A'$ describe the lattice, and the same
family of subtori, if there is a matrix $Z\in GL(k,\mathbb{Z})$ such that $A'=ZA$.

Recall that the {\em dual lattice} $L^*$ is defined by 
$$ L^* := \{y\in\mathbb{R}^n\colon y\in{\rm span}(L),\ A y \in \mathbb{Z}\}$$
For any $y\in L^*$ and $x\in T$, we have $A(x+y)  = c$, thus $x+y\in T$. 
Since span$(L)$ is orthogonal to the tangent space of $T$, the lifted subtorus is the product 
of an affine space and the dual lattice $L^*$.

\begin{example}\label{ex:2torus}
 Let $n=3$ and $k=2$. We consider the 1-dimensional subtorus (i.e. geodesic)
that passes through 0 in direction $d=[1,2,3]\tran$. The transposed null space of $d$ is
$$A := \begin{bmatrix}-3 & 0 & 1 \\ -2 & 1 & 0\end{bmatrix}.$$
That is, the points on the subtorus are those that satisfy the resonance relations
$-3 x + z = 0$ and $-2 x + y = 0$.
A basis for the dual lattice is given by
$$ B := A\tran (A A\tran )^{-1} = \begin{bmatrix}-\frac{3}{14} & -\frac{1}{7}\\-\frac{6}{14} & \frac{5}{7} \\ 
\frac{5}{14} & -\frac{3}{7} \end{bmatrix}.$$
The lattice generated by the columns of $B$ shows the intersection between the subtorus and
a lifted plane orthogonal to it (see Figure \ref{fig:t123}).
\end{example}

\begin{figure}
\begin{center}
\includegraphics[width=5cm]{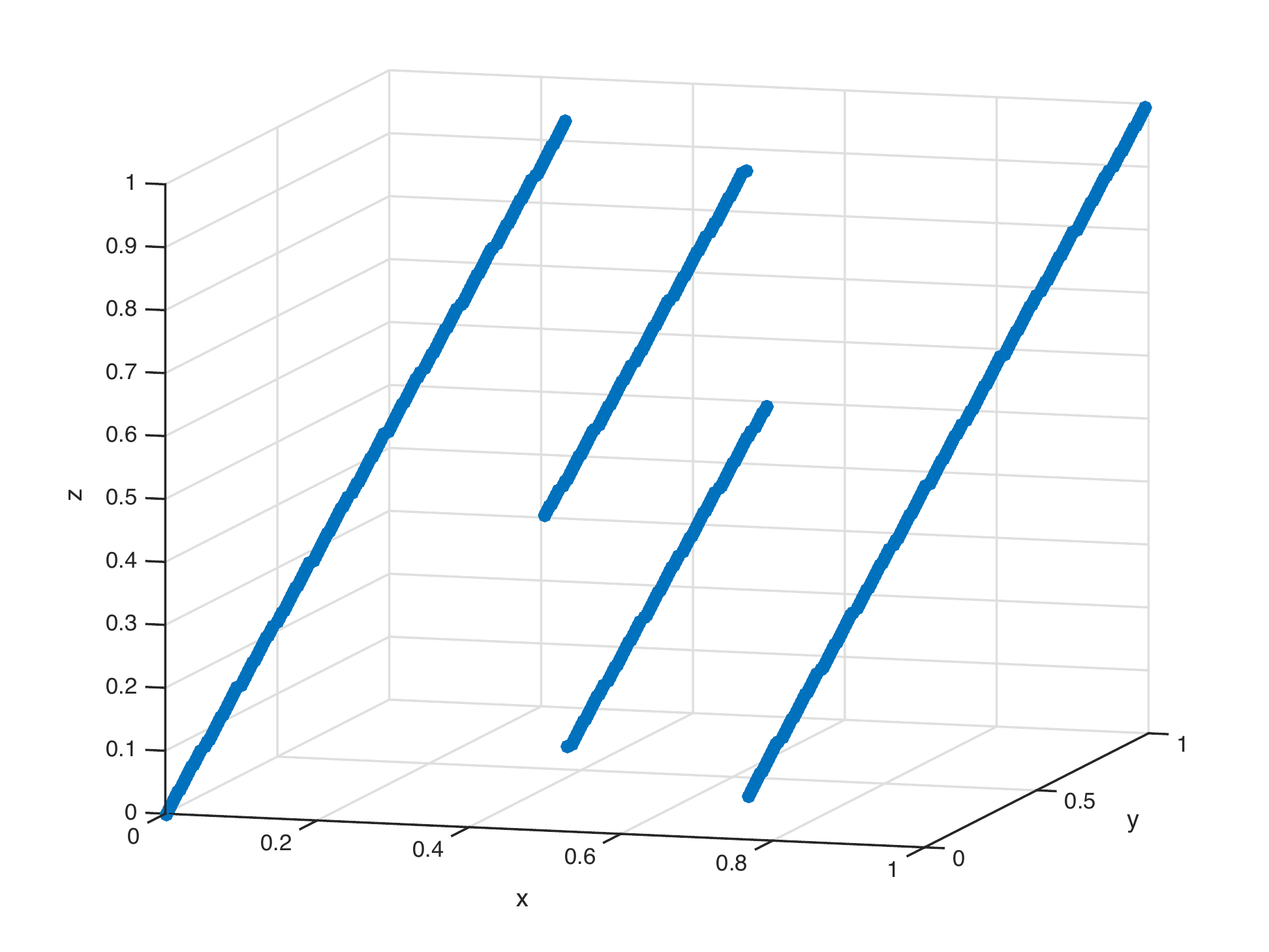}
\hspace{5mm}
\includegraphics[width=5cm]{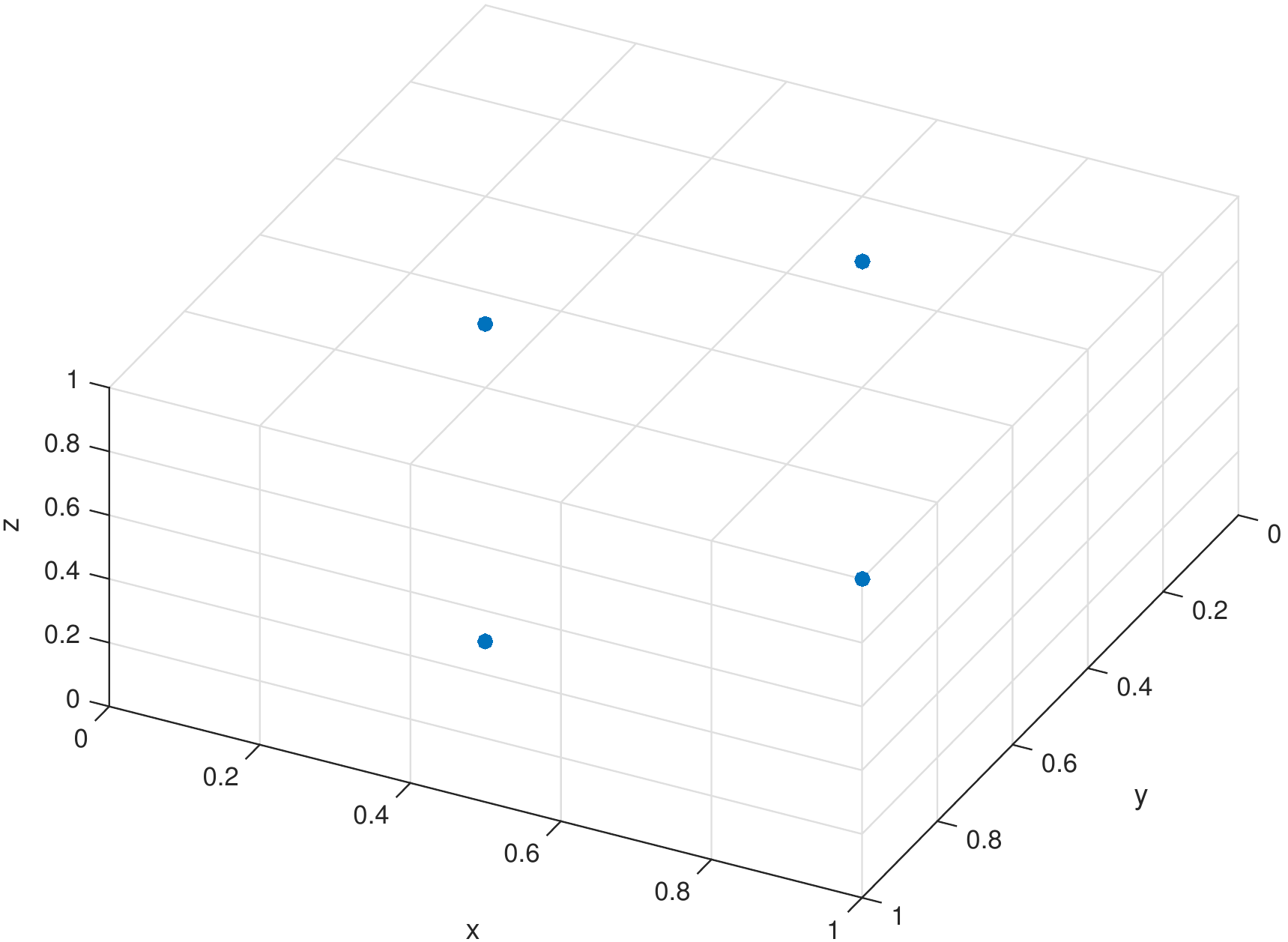}\\
\end{center}
\caption{\label{fig:t123} The geodesic in $\T ^3$ through the origin 
in direction $d=[1,2,3]\tran$, seen from two different viewpoints.
Viewing in direction $d$ (right) shows the lattice formed in $\mathbb{R}^3$ by the intersection
of the lifted geodesic with an orthogonal plane.}
\end{figure}

\begin{example}Let $n=2$, $k=1$, $c=0$, and $A=[2,5]$. A basis for the dual lattice is
$A\tran(A A\tran)^{-1}=[\frac{2}{29};\frac{5}{29}]$; this vector is orthogonal to the tangent space of the geodesic and gives the spacing between its successive winds, which are spaced a distance
$1/\sqrt{29}\approx0.19$ apart (see Figure \ref{fig:torus}). The fractional part of $2 x_1 + 5 x_2$ measures the angular distance from a point $x$ to the geodesic.
\end{example}

\subsubsection{Finding the best subtorus with given resonance relation}
We now consider the problem of computing the distance from a datapoint to a subtorus.
Consider the example shown in Figure \ref{fig:t123}. 
To compute the Euclidean distance, 
it is necessary to (i) lift
the datapoint to $\R^3$; (ii) project to a plane orthogonal to the tangent space of the subtorus;
and (iii) find the nearest point in the dual lattice $L^*$; and (iv) compute the distance
to this point. The difficult step is (iii), an instance of the Closest Vector Problem (CVP) in 
the dual lattice $L^*$. However, this is a difficult problem in high dimensions and
the degree of complexity it entails seems unnecessary here.

This step can be avoided by modifying the metric suitably. 
 
As we are working with angular distances, we replace the standard angular distance
$d(x,y)=2\pi |x-y|\le\pi$, $x,y\in\T $,
by the chordal distance $d_c(x,y)= \frac{1}{2}\sin\pi|x-y|\le\frac{1}{2}$. The Karcher mean of
angles $x_i\in[0,1)$ is easily calculated as the circular mean
$$ \bar x = \mathrm{atan2}\left(\frac{1}{d}\sum_{i=1}^d \sin(2\pi x_i),\frac{1}{d}\sum_{i=1}^d \cos(2\pi x_i)\right).$$
We define the circular mean $\bar x$ of $x_i\in\T^n$ componentwise.
We now introduce a further modification of the metric that is adapted to the chosen family
of subtori.

\begin{definition} Let $C\in GL(n,\mathbb{Z})$. Then $d_C(x,y) := d_c(Cx,Cy)$.
\end{definition}

\begin{proposition}
Let $A$ be the first $k$ rows of $C\in GL(n,\mathbb{Z})$.
Amongst the subtori with resonance relation $A$, the subtorus of best fit in the metric $d_C$
to the data $x_1,\dots,x_d\in \T ^n$ is $\{x\in\T^n\colon A x  = c\}$,
where $c$ is the circular mean of $A x_1,\dots,Ax_d$.
\end{proposition}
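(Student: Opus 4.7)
The plan is to reduce the problem to the one-dimensional case by exploiting the change of variables induced by $C$. Since $C\in GL(n,\mathbb{Z})$, the map $x\mapsto y:=Cx$ is a well-defined automorphism of $\T^n$, and $d_C$ is by definition just the componentwise chordal metric in the $y$-coordinates. Because $A$ consists of the first $k$ rows of $C$, the subtorus $\{x\in\T^n:Ax=c\}$ becomes, in $y$-coordinates, simply $\{y\in\T^n:y_1=c_1,\dots,y_k=c_k\}$, a coordinate slice. This is the reduction we want.

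Next, I would compute the squared distance from a point $y\in\T^n$ to such a slice. Because the chordal distance on $\T^n$ decomposes as $d_c(y,y')^2=\tfrac14\sum_{j=1}^n\sin^2\pi(y_j-y'_j)$, minimizing over $y'$ in the slice (where only $y'_{k+1},\dots,y'_n$ are free) gives $d_C(x,T)^2=\tfrac14\sum_{j=1}^k\sin^2\pi((Cx)_j-c_j)$. Summing over the data points $x_1,\dots,x_d$, the total objective decouples as
\[
d_C(X,T)^2=\frac14\sum_{j=1}^k\sum_{i=1}^d\sin^2\pi((Cx_i)_j-c_j).
\]
Each inner sum depends only on $c_j$, so the minimization over $c=(c_1,\dots,c_k)\in\T^k$ splits into $k$ independent one-dimensional problems.

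The remaining step is to show that for each $j$ the minimizer of $\sum_i\sin^2\pi(z_i-c_j)$ in $c_j$ is the circular mean of $z_1,\dots,z_d$, where $z_i=(Cx_i)_j=(Ax_i)_j$. Using $\sin^2\pi t=\tfrac12(1-\cos 2\pi t)$, this is equivalent to maximizing
\[
\sum_{i=1}^d\cos 2\pi(z_i-c_j)=\cos 2\pi c_j\sum_i\cos 2\pi z_i+\sin 2\pi c_j\sum_i\sin 2\pi z_i,
\]
an inner product of the unit vector $(\cos 2\pi c_j,\sin 2\pi c_j)$ with a fixed vector. The maximum is attained when the two vectors are parallel, i.e.\ when $2\pi c_j=\mathrm{atan2}\bigl(\sum_i\sin 2\pi z_i,\sum_i\cos 2\pi z_i\bigr)$, which is exactly the circular mean of $z_1,\dots,z_d$. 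Applied coordinatewise, this gives $c=\overline{Ax}$.

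The only real subtlety, rather than an obstacle, is bookkeeping around the change of coordinates: one needs that $C$ being unimodular makes $y=Cx$ a diffeomorphism of $\T^n$ (so subtori correspond to subtori and the infimum over $y'$ in the slice is realized), and that the definition of $d_C$ indeed pulls back to the coordinatewise chordal metric in $y$. Both are immediate, and the rest is the direct one-dimensional circular-mean computation above.
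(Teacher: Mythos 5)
Your proof is correct and follows essentially the same route as the paper's: change coordinates by $y=Cx$ so the subtorus becomes the slice $y_1=c_1,\dots,y_k=c_k$, observe that only the first $k$ coordinates contribute to $d_C(x,T)$, and minimize at the circular mean. The only difference is that you explicitly verify (via $\sin^2\pi t=\tfrac12(1-\cos 2\pi t)$ and the inner-product argument) that the circular mean minimizes the sum of squared chordal distances, a fact the paper asserts without proof earlier in the section.
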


\begin{proof}
In coordinates $y=Cx$, the subtorus is given by 
$$\hat T=\{y\in\T^n\colon y_i=c_i,\ i=1,\dots,k\},$$
 and the distance from $y$ to the subtorus $T=\{x\colon Ax=c\}$ is determined by the angular displacement $\hat y-c\in\T^k$ where $\hat y = (y_1,\dots,y_k)$.
The distance $d_C(x,T)=d_c(\hat y,c)$ is minimized at $c = \bar{\hat y}$.
\end{proof}

\begin{figure}
\begin{center}
\includegraphics[width=6cm]{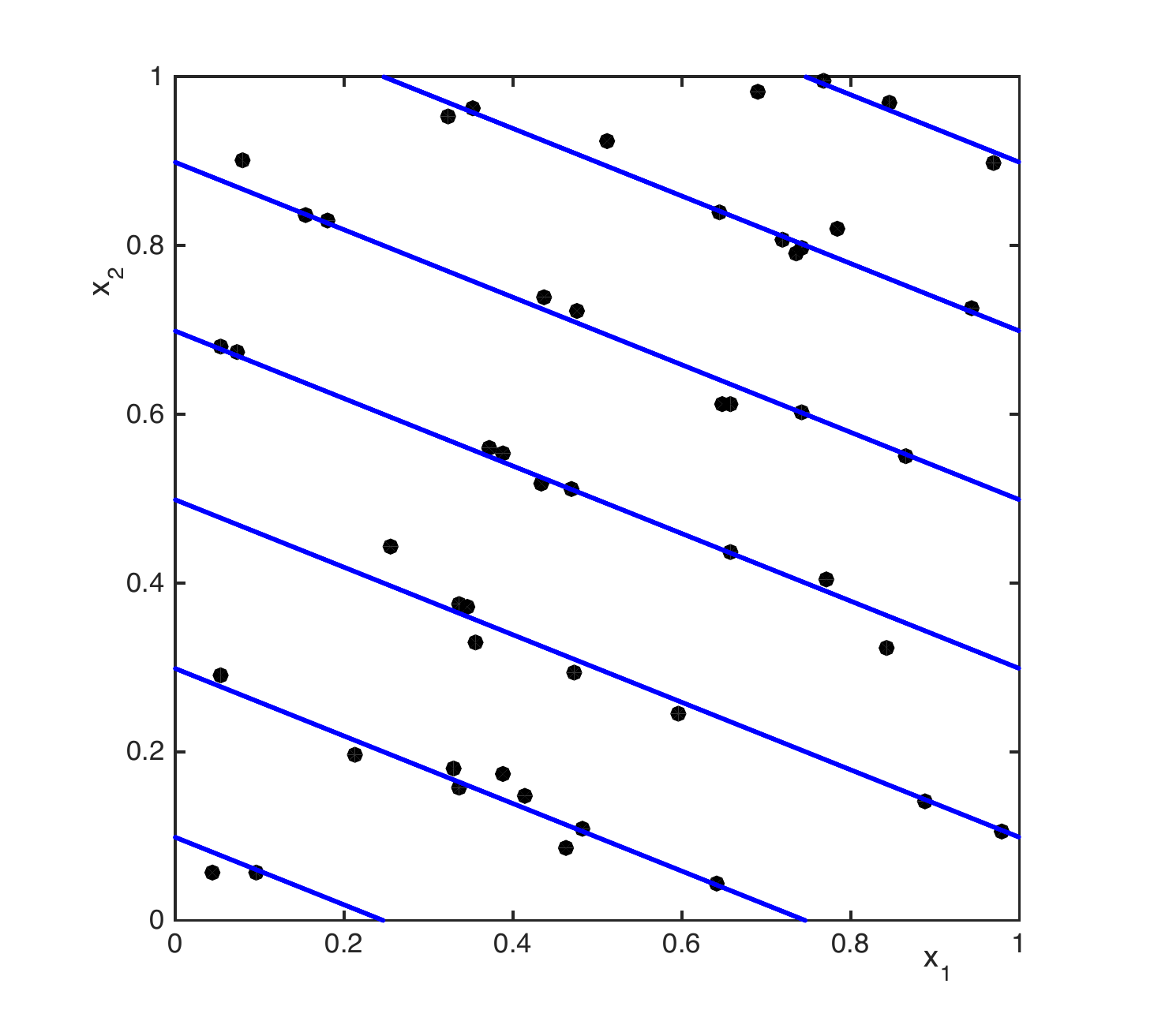}
\raise10mm\hbox{\includegraphics[width=7cm]{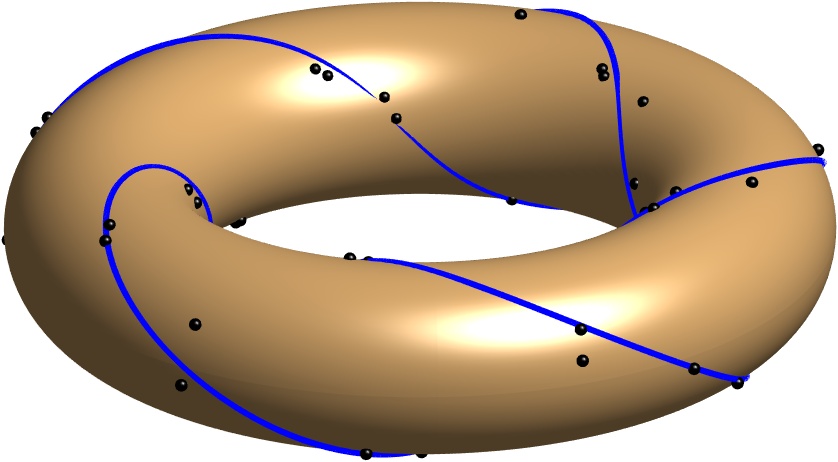}}
\end{center}
\caption{\label{fig:torus}
Fitting data on a torus. Here the closed geodesic of best fit is computed to a set of 50 data points
on $S^1\times S^1$.
The data set is synthetic and has been chosen to lie near the geodesic with resonance relation
$2 x_1 + 5 x_2 = $ const.;
each data point has normal random noise of standard deviation $0.1/(2\pi)$ in each angle. }
\end{figure}

Note that although $d_C$ depends on the whole matrix $C$, the best subtorus only
depends on its first $k$ rows, $A$.

If the rows  of $A$ are pairwise orthogonal and all have the same length,  then $d_C(x,T)=d_c(x,T)$, but in general the two metrics are not the same. Different bases $A$ of $L$ lead to different distance measures and different best tori. Most lattices have no orthogonal bases.
However, it does point
to the necessity of choosing a good basis for the resonance relations, one
in which the relations are as nearly orthogonal as possible. This is
another standard problem in lattice theory, one that can be solved exactly
in low dimensions, and approximately (by the LLL algorithm) in high dimensions.

\medskip
\noindent
{\bf Example 3 (ctd.)}
 The angle between the two basis vectors in Example
1 is $32^\circ$. A more nearly orthogonal basis is 
$$\begin{bmatrix}-1 & -2 \\ -1 & 1 \\ 1 & 0\end{bmatrix}$$
in which the angle between the basis vectors is $75^\circ$.
\medskip

The best fit of a 1-torus with fixed resonance relation to data in $\T^2$ is shown
in Figure \ref{fig:torus}, and the best fits of 1-tori and 2-tori with fixed resonance
relations is shown in Figure \ref{fig:2torus}.

\subsubsection{Model selection for tori}\label{model_selection}

In finding the best subtorus amongst those with {\em fixed} resonance relations, 
the overall scaling of the metric is irrelevant. It becomes relevant during the model
selection phase, when fitted subtori with different resonance relations are compared.
Here we illustrate one possible approach to this issue using
(i) the unscaled circular means, as given above; and (ii)
the `leave one out' model selection method.
Item (i) means that the maximum distance of any point to a subtorus, in each 
coordinate, is 0.5, regardless of the resonance relations or winding density of the 
subtorus. While the metric could be scaled down, to make it more closely approximate
the original Riemannian distance in $\T ^n$, doing so would strongly
favour models with very dense windings, as they pass close to every point
in $\T ^n$. Therefore we stick with the unscaled metric $d_C$ defined above.

Item (ii) means that
for each data point $i$, the subtorus of best fit to the data set
omitting point $i$ is calculated, from which the prediction error of this fit to data point $i$
can be calculated. In the scaled chordal metric we are using, this is $e_i := \|\frac{1}{2}\sin\pi(A  x_i-c)\|_2$
This lies in the interval $[0,\frac{1}{2}\sqrt{k}]$, taking the value 0 if the omitted data point lies on the geodesic,
and taking the value $\frac{1}{2}\sqrt{k}$ if it lies midway between two winds of the geodesic in each of the
$k$ directions.
The mean projection error $\|e_i\|_2$ is taken as a measure of the goodness of fit of the model with
resonance relations $A$. The resonance relation with minimum $\|e_i\|_2$ is chosen.

The method is illustrated on a synthetic data set of 50 points that lie near the geodesic
$2 x_1 + 5 x_2 = $ const.; see Figure \ref{fig:torus}. All resonance relations
with $\|A\|_\infty<10$ are tested. The leave-one-out method selects the `correct'
$A =[2,5]$ for this dataset.

\subsubsection{Nested approximations}

So far we have presented a method for finding the best subtorus of a given dimension.
However, note that the same method naturally produces a nested sequence
of approximations of subtori of different dimensions.

\begin{proposition}
Let $A\in GL(n,\mathbb{Z})$, let $A_k$ be the first $k$ rows of $A$, and let $x_1,\dots,x_d$ be data in $\T ^n$.
For each $k=1,2,\dots,n-2$, the $(n-k)$-dimensional subtorus with resonance relations
$A_k$ of best fit  contains the
$(n-k-1)$-dimensional subtorus of resonance relations
$A_{k+1}$ of best fit.
\end{proposition}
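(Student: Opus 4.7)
The plan is to unpack the two best fitting subtori from the preceding proposition and observe that the $(n-k-1)$-dimensional one is defined by strictly more of the same linear equations as the $(n-k)$-dimensional one, with compatible right-hand sides. Concretely, I would let $c^{(k)}$ denote the circular mean of $A_k x_1, \ldots, A_k x_d \in \T^k$ and $c^{(k+1)}$ the circular mean of $A_{k+1} x_1, \ldots, A_{k+1} x_d \in \T^{k+1}$. By the preceding proposition (applied with the unimodular matrix $A$ and the appropriate truncation, which still extends to a unimodular matrix because $A$ itself does), the best subtori with resonance relations $A_k$ and $A_{k+1}$ are
\[
T_k = \{x \in \T^n : A_k x = c^{(k)}\}, \qquad T_{k+1} = \{x \in \T^n : A_{k+1} x = c^{(k+1)}\}.
\]

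The key observation is that the circular mean on $\T^{k+1}$ is computed componentwise, and the first $k$ components of $A_{k+1} x_i$ are exactly $A_k x_i$. Hence the first $k$ coordinates of $c^{(k+1)}$ agree with $c^{(k)}$. Writing $A_{k+1} = \begin{pmatrix} A_k \\ a_{k+1} \end{pmatrix}$ and $c^{(k+1)} = \begin{pmatrix} c^{(k)} \\ c_{k+1} \end{pmatrix}$, the defining condition of $T_{k+1}$ becomes the conjunction of $A_k x = c^{(k)}$ and $a_{k+1} x = c_{k+1}$, the first of which is precisely the defining condition of $T_k$. Therefore every $x \in T_{k+1}$ belongs to $T_k$, giving the nesting $T_{k+1} \subset T_k$.

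The only point that requires a slight comment is the invocation of the proposition for truncations of $A$: I would note that any $k$ rows forming a prefix of a unimodular matrix in $GL(n,\mathbb{Z})$ form a unimodular $k \times n$ matrix in the sense required (since the truncated rows together with the remaining rows of $A$ already exhibit an extension to an element of $GL(n,\mathbb{Z})$), so the proposition applies and the chosen metric $d_{C}$, with $C = A$, supplies the best-fit formula used above. There is no genuine obstacle here; the argument is essentially a one-line consequence of the componentwise nature of the circular mean, and the main content is simply identifying the two optima explicitly before intersecting their defining equations.
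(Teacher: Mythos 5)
Your proof is correct and follows the same route as the paper's: both arguments rest on the observation that the circular mean is computed componentwise, so the first $k$ entries of the best-fit constant for $A_{k+1}$ coincide with the best-fit constant for $A_k$, and the extra resonance relation only adds a constraint. Your write-up simply makes explicit the identification of the two optima and the unimodularity of the truncations, which the paper leaves implicit.
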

\begin{proof}
The subtori are $A_k  x  = c$, where $A_k$ is the first $k$ rows of $A$
and the entries in $c\in\mathbb{R}^k$ are the circular means of $A x_i$. 
Adding another resonance relation, i.e. increasing $k$ by 1,
does not change the first $k$ entries of $c$.
\end{proof}

Thus to each  $A\in GL(n,\mathbb{Z})$ we get a nested
sequence of subtori of dimension 1 to $n-1$ and an approximation
error associated to each subtorus. If the rows of $A$ are 
nearly orthogonal, this is a close analogue of standard PCA.

\medskip
\noindent
{\bf Example 3 (ctd.)}
We take
$$A=\begin{bmatrix}-1 & -2 & 0\\ -1 & 1 & 1 \\ 1 & 0 & -1 \end{bmatrix}.$$
We take a synthetic data set of 50 points on $\T ^3$. 
(See Figure \ref{fig:2torus}.)
When $k=1$ we are seeking the best 2-torus of the form
$-x -y - z=$ const.; it has mean error 0.049.
When $k=2$ we are seeking the best 1-torus of the
form $-x-y-z=$ const., $-2 x + y = $ const., i.e., the best
geodesic parallel to $[1,2,3]$. It has mean error 0.049 orthogonal
to the previously found 2-torus, i.e. in the direction $[-1,-1,1]$,
 and mean error 0.169 in the direction $[-2,1,0]$; its mean
 error is $\sqrt{0.049^2+0.169^2}=0.176$. 
 These errors are scaled so that the distance
 between winds is 1, i.e., the distance
 between winds of the blue 2-torus is 1 and
 the distance, measured within the blue 2-torus,
 between winds of the red 1-torus is 1.
 \medskip
 
 \begin{figure}
 \begin{center}
 \includegraphics[width=6.5cm]{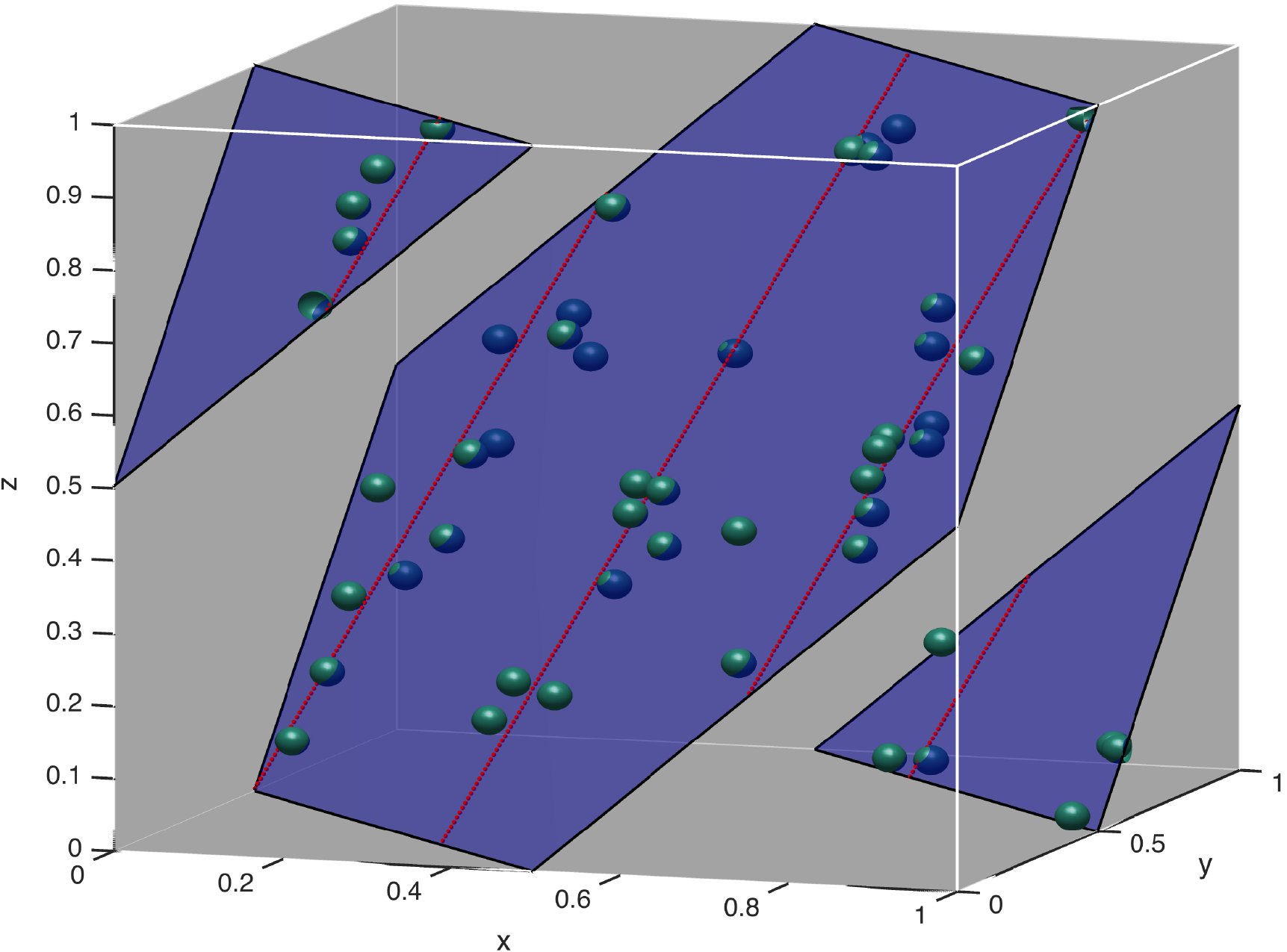}
 \hspace{4mm}
 \includegraphics[width=7.5cm]{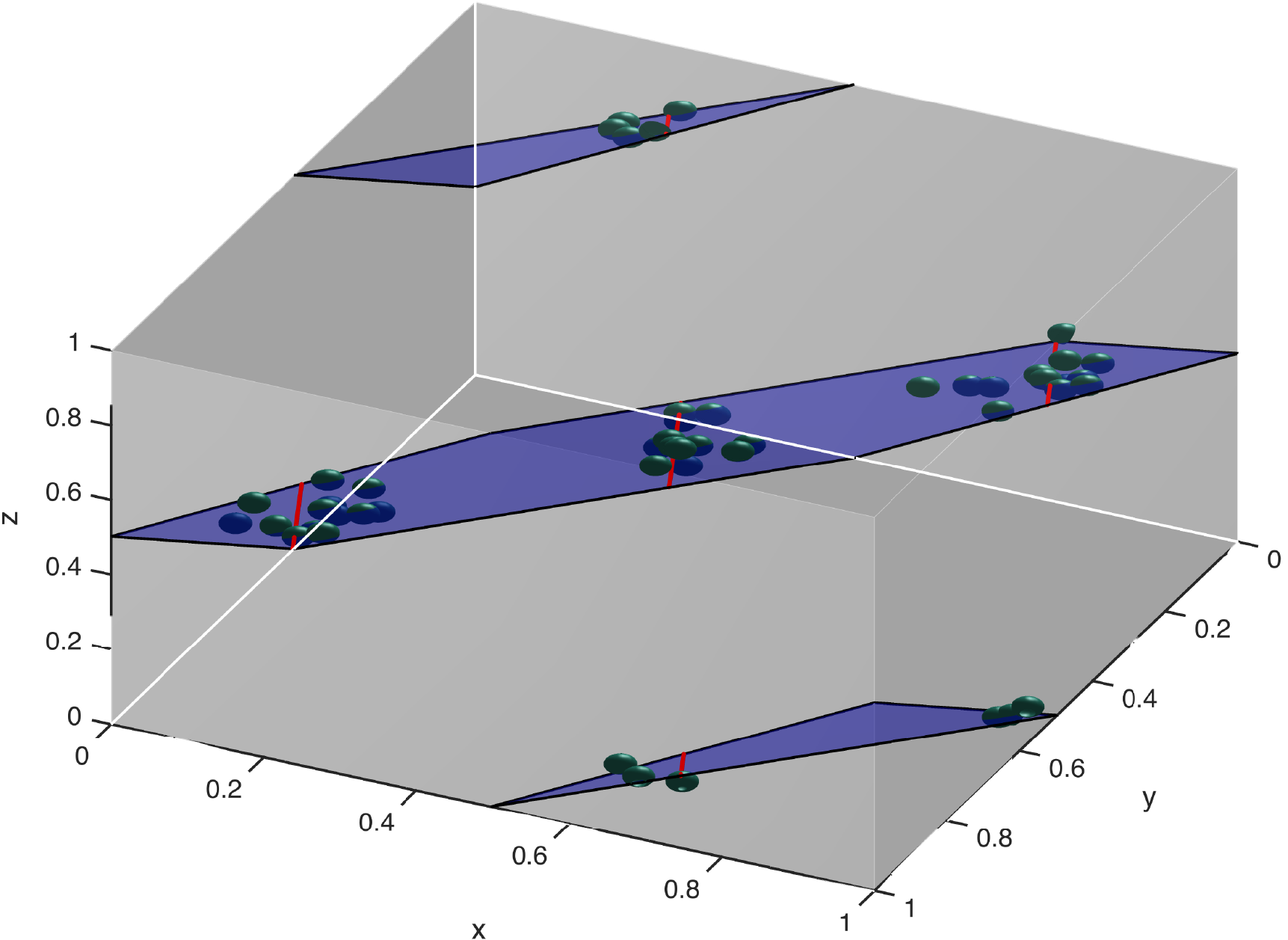}\\
 \end{center}
 \caption{\label{fig:2torus} Fitting data on a torus (see Example \ref{ex:2torus}). The best
 1-torus and 2-torus approximating 50 data points on $\T ^3$,
 are shown from two different viewing directions. The
 subtori have been chosen from those with fixed resonance
 relations, i.e., only their translations fitted.}
 \end{figure}

\subsection{Polyspheres}
\label{sec:polysphere}

The polyspheres $S^2 \times\cdots\times S^2$ arise frequently in practical applications, for example in joint data. We begin by considering the case $S^2\times S^2$, as the arguments are analogous in higher dimensions. We must classify the geodesic submanifolds of $S^2\times S^2$, for which purpose  we recall that the symmetric algebra of $S^2$ is $\mathfrak{o}(3)=\mathfrak{h}+\mathfrak{m}$, where
\[
\mathfrak{h} =
\left\{h(A):=
\begin{pmatrix}
0 & 0 \\
0 & A
\end{pmatrix}\!,
A\in\mathfrak{o}(2)\right\},
\quad
\mathfrak{m} = \left\{
m(\xi):=
\begin{pmatrix}
0 & -\xi\tran  \\
\xi & 0
\end{pmatrix}, \xi\in\mathbb{R}^2
\right\}.
\]

A straightforward calculation shows that 
$[m(\xi),m(\zeta)]=h\big([[\xi,\zeta]]\big)$, where $[[\xi,\zeta]]:=\zeta\xi\tran  - \xi\zeta\tran$, and that $[h(A),m(\xi)] = m(A\xi)$, which will be used frequently in the coming calculations. The symmetric algebra of $S^2\times S^2$ is $(\mathfrak{h}+\mathfrak{h})+(\mathfrak{m}+\mathfrak{m})$, and the totally geodesic submanifolds (and hence symmetric subspaces) take the form
\[
\{ \exp(v)\cdot x \;|\; v\in \mathfrak{m'} \},
\]
where $x\in S^2\times S^2$ is an arbitrary point, and $\mathfrak{m'}\subset \mathfrak{m}+\mathfrak{m}$ is a Lie triple system. We now state the results we obtain, the proofs of which are to be found in the appendix. We first classify the geodesic submanifolds of $S^2\times S^2$:
\begin{theorem}\label{2d}\ \\
The 1-dimensional connected geodesic submanifolds of $S^2\times S^2$ are the submanifolds
$\{(\exp(r_1 t)x,\exp(r_2 t)y\colon t\in\R\}$, where $r_i\in\mathfrak{o}(3)$ and $x$, $y\in S^2$.\\
The 2-dimensional connected geodesic submanifolds of $S^2\times S^2$ are of the following types:
\begin{enumerate}
\item $\{(x,Rx)\colon x\in S^2\}\simeq S^2$, where $R\in O(3)$ is fixed.
\item $\{(x,y)\colon x\in S^2\}\simeq S^2$ where $y\in S^2$ is fixed, 
or \\$\{(x,y)\colon y\in S^2\}\simeq S^2,$ where $x\in S^2$ is fixed.
\item $\{(\exp(r_1 t_1)x,\exp(r_2 t_2)y)\colon t_1,t_2\in\R\}\simeq S^1\times S^1$, where $r_i\in\mathfrak{o}(3)$ and $x$, $y\in S^2$ are fixed.
\end{enumerate}
The 3-dimensional connected geodesic submanifolds of $S^2\times S^2$ are precisely the submanifolds $\{(x,\exp(rt)y)\colon x\in S^2,\ t\in\R\}\simeq S^2\times S^1$, for some fixed $y\in S^2$ and $r\in\mathfrak{o}(3)$.
\end{theorem}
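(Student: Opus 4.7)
The plan is to apply the correspondence between totally geodesic submanifolds through the origin and Lie triple systems (Theorem 1), together with the transitivity of $G = SO(3)\times SO(3)$ on $S^2\times S^2$. This reduces the statement to enumerating $k$-dimensional Lie triple systems $\mathfrak{m}'\subset\mathfrak{m}+\mathfrak{m}\simeq\mathbb{R}^2\oplus\mathbb{R}^2$ for $k=1,2,3$, and then interpreting each geometrically after translation by a group element.

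From the identities $[m(\xi),m(\zeta)] = h\bigl([[\xi,\zeta]]\bigr)$ and $[h(A),m(\xi)] = m(A\xi)$ stated just before the theorem, a direct computation yields the triple-bracket formula
\[
\bigl[[(m(\xi_1),m(\eta_1)),(m(\xi_2),m(\eta_2))],(m(\xi_3),m(\eta_3))\bigr] = \bigl(m(\omega^\xi_{12}J\xi_3),\,m(\omega^\eta_{12}J\eta_3)\bigr),
\]
where $\omega^\xi_{12} = \det[\xi_1\mid\xi_2]$ is the signed area and $J$ is the generator of $\mathfrak{o}(2)$. The Lie triple condition on $\mathfrak{m}'$ then becomes an explicit linear-algebraic constraint that I can treat dimension by dimension.

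In dimension one, the condition is automatic (as $[v,v]=0$), so any generator $(m(\xi),m(\eta))$ exponentiates to a curve of the stated form; translating by $(g_1,g_2)\in G$ replaces $m(\xi_i)$ with $r_i = g_i m(\xi_i) g_i^{-1}\in\mathfrak{o}(3)$. In dimension two, I would stratify by the ranks of the coordinate projections $\pi_i\colon\mathfrak{m}'\to\mathfrak{m}$: if one projection vanishes then $\mathfrak{m}'$ is a full factor, giving Type 2; if both projections have rank one then a short bookkeeping argument shows $\mathfrak{m}' = \mathbb{R}\xi_0\oplus\mathbb{R}\eta_0$, giving Type 3; and if one projection has rank two then $\mathfrak{m}'$ is the graph of a linear map $T$, whose closure under the triple bracket forces $TJ = (\det T)\,JT$. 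The only solutions besides $T=0$ (another instance of Type 2) are $T\in O(2)$, which will produce Type 1. In dimension three, $\mathfrak{m}'$ is the orthogonal complement of a single vector $w = (\alpha,\beta)$; after using the $H = SO(2)\times SO(2)$ action to normalize $\alpha$ and $\beta$ along a common coordinate axis, a direct triple-bracket computation on an explicit basis of $\mathfrak{m}'$ produces a vector outside $\mathfrak{m}'$ unless $\alpha\beta = 0$. This leaves only $\mathfrak{m}' = \mathfrak{m}\oplus\beta^\perp$ or $\alpha^\perp\oplus\mathfrak{m}$, which exponentiate and translate to the $S^2\times S^1$ submanifolds of the theorem.

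The main obstacle is the graph case in dimension two: once one knows algebraically that $T\in O(2)$, one still has to identify the exponentiated Lie triple system with a set of the form $\{(x,Rx):x\in S^2\}$ for an honest $R\in O(3)$, and to show that every $R\in O(3)$ arises. The key observation is that under the block embedding $T\mapsto\mathrm{diag}(1,T)\in O(3)$, one has both $T\cdot o = o$ and $T\,m(\xi)\,T^{-1} = m(T\xi)$, so $T\exp(m(\xi))\cdot o = \exp(m(T\xi))\cdot o$, identifying the Lie-theoretic graph with the genuine geometric graph of $T$. Translation by $(g_1,g_2)\in G$ acts on the $O(3)$-parameter as $R\mapsto g_2 R g_1^{-1}$, which is transitive on each connected component of $O(3)$, so every $R\in O(3)$ is realized.
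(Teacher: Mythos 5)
Your proposal is correct and follows essentially the same route as the paper's appendix proof: reduce to Lie triple systems in $\mathfrak{m}\oplus\mathfrak{m}\cong\mathbb{R}^2\oplus\mathbb{R}^2$, classify the two- and three-dimensional subspaces (your rank stratification of the coordinate projections is equivalent to the paper's subspace lemma), derive the condition forcing the graph map into $O(2)\cup\{0\}$ (your $TJ=(\det T)\,JT$ is the paper's $BAB\tran B=BA$ computation), and identify the exponentiated graph with $\{(x,Rx)\}$ by the same conjugation trick. The only substantive differences are your explicit verification that every $R\in O(3)$ is realized and the slightly cleaner orthogonal-complement-plus-$H$-normalization treatment of the three-dimensional case; neither changes the method.
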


The principal symmetric space decompositions of $S^2\times S^2$ can by summarized by the following diagram: 
\begin{diagram}
& & S^2\times S^2 & & & & \\
& \ldTo{\hbox{\rm Type 1}} & & \rdTo & & & &\\
S^2 & & & & S^2 \times S^1 & &  \\
\dTo & & & \ldTo{\hbox{\rm Type 2}} &  & \rdTo{\hbox{\rm Type 3}} & \\
 S^1 & & S^2 & & & & S^1\times S^1 \\
  & & \dTo & & & & \dTo \\
  & & S^1 & & & & S^1 
\end{diagram}

\begin{theorem}\label{high poly}
The totally geodesic submanifolds of $(S^2)^n$ are isomorphic to a product of copies of $S^2$ and $S^1$. The rooted tree structure of the principal symmetric space approximations $\mathcal{PSSA}(X,(S^2)^n)$ is characterized as follows: every edge of the tree corresponds to either 
\begin{enumerate}
\item A reduction to an $m$-torus inside an $n$-torus ($m<n$)
\item A 2-dimensional reduction arising from a coupling of two spheres after situation 1 of Lemma~\ref{2d}.
\item One of the following one-dimensional reductions: the restriction to a great circle $S^1\subset S^2$, or to a trivial submanifold ${x_0}\subset S^1$.
\end{enumerate}
\end{theorem}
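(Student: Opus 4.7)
The plan is to prove both claims together by induction on $n$, with Theorem \ref{2d} furnishing the base case $n=2$. The classification of totally geodesic submanifolds reduces, via the Lie triple system correspondence, to classifying Lie triple systems $\mathfrak{m}' \subset \bigoplus_{i=1}^n \mathfrak{m}_i$, where each $\mathfrak{m}_i \cong \R^2$. Using the formulas from Section \ref{sec:polysphere}, the triple bracket on a single $\mathfrak{m}_i$ works out to
\[
[[\xi,\eta],\zeta] = \eta(\xi\cdot\zeta) - \xi(\eta\cdot\zeta),
\]
and this extends componentwise to $\bigoplus \mathfrak{m}_i$, so the analysis can proceed coordinatewise.

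First I would partition $\{1,\dots,n\}$ into $I_0 \sqcup I_1 \sqcup I_2$ according to whether $\dim \pi_i(\mathfrak{m}')$ is $0$, $1$, or $2$. Indices in $I_0$ correspond to frozen coordinates, i.e.\ fixed points in the respective $S^2$ factors. For indices in $I_1$, the displayed triple bracket vanishes whenever $\xi,\eta,\zeta$ are collinear, so the restriction of $\mathfrak{m}'$ to these coordinates is an abelian Lie triple system; it integrates to a subtorus inside $\prod_{i\in I_1} S^1$ whose regular form is governed by Proposition \ref{prop:subtori}. For indices in $I_2$, I would apply Theorem \ref{2d} pairwise: for any $i,j\in I_2$ the projection of $\mathfrak{m}'$ to $\mathfrak{m}_i \oplus \mathfrak{m}_j$ is a Lie triple system with full projection to each summand, so it is either the full sum (uncoupled, two independent $S^2$ factors) or of Type 1 (a diagonal $S^2$ coming from a fixed rotation). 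This defines an equivalence relation on $I_2$ whose classes parametrize the $S^2$ factors of the resulting submanifold.

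The main obstacle will be decoupling the sphere part from the torus part, i.e.\ showing $\mathfrak{m}'$ splits as the direct sum of its restrictions to $I_1$ and $I_2$. I would rule out cross couplings by exploiting the full $2$-dimensional range of $\pi_j(\mathfrak{m}')$ for $j\in I_2$: given $(v,w)\in\mathfrak{m}'$ with $v$ in an $I_2$-coordinate and $w\neq 0$ in an $I_1$-coordinate, triple-bracketing against elements of $\mathfrak{m}'$ supported purely in the $j$-th coordinate produces a family whose $j$-part sweeps a $2$-dimensional subspace while the $i$-part must stay aligned with the single line $\pi_i(\mathfrak{m}')$. A short computation with the formula above forces $w=0$, yielding the required direct-product structure and the first half of the theorem.

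With the product classification in hand, the tree structure follows by reading off what can happen at a single recursion step. Each node of $\mathcal{PSSA}(X,(S^2)^n)$ is a product of $S^2$ and $S^1$ factors, and each child is a best-approximating totally geodesic submanifold of that product. By the classification, the only ways to pass from a parent product to a child product are: (i) replace the torus factor $(S^1)^{n_2}$ by a subtorus, giving edge type~1; (ii) collapse two $S^2$ factors into one diagonal $S^2$ via a fixed rotation, which is exactly Type~1 of Theorem \ref{2d} and gives edge type~2; or (iii) restrict a single $S^2$ to a great circle $S^1$, or a single $S^1$ to a point, giving edge type~3. That these exhaust the possibilities follows by applying the classification to the Lie triple subsystems of the node. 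Specialising to $n=2$ recovers the displayed $S^2\times S^2$ diagram, completing the proof.
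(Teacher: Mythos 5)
Your overall strategy---reduce to Lie triple systems in $\bigoplus_i\mathfrak{m}_i$, stratify coordinates by $\dim\pi_i(\mathfrak{m}')$, and invoke the $S^2\times S^2$ classification pairwise---is close in spirit to the paper's proof, which puts $\mathfrak{m}'$ in a normal form generalizing Lemma~\ref{subspace} and then tests pairs of coordinates against Theorem~\ref{2d}. However, there is a genuine gap in your treatment of the $I_2$ block: pairwise projections cannot detect a single coordinate being coupled to \emph{two} independent free parameters. Concretely, consider the subspace $\{(x,y,Ax+By)\colon x,y\in\R^2\}$ with $A,B\in O(2)$: every pairwise projection $\pi_{\{i,j\}}$ of it is all of $\R^2\oplus\R^2$, so your equivalence relation places the three indices in three singleton classes and would report three independent $S^2$ factors---yet the subspace is only $4$-dimensional, so the asserted product structure fails. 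Nothing in your argument checks that the classes account for the full dimension of $\mathfrak{m}'$, i.e.\ that $\pi_{I_2}(\mathfrak{m}')$ really is the direct sum of the class diagonals. Excluding exactly this configuration is the crux of the paper's proof for $n>2$: it computes $[[m(Ax_1+By_1),m(Ax_2+By_2)],m(Ax_3+By_3)]$, shows via Lemma~\ref{aux mat} that closure forces $A=B$, and then derives $A\bigl([[x_1,x_2]]y_3+[[y_1,y_2]]x_3\bigr)=0$, impossible for invertible $A$. Your proof needs this computation (or an equivalent); without it the first sentence of the theorem is not established.

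A secondary issue: your sketch for decoupling $I_1$ from $I_2$ does not work as stated. If you triple-bracket $(v,w)$ against elements supported purely in the $j$-th coordinate, the resulting $I_1$-components are brackets in which two of the three arguments vanish, hence are identically zero; no constraint on $w$ is produced, and in any case "$w=0$" is not the right conclusion (one wants $(v,0)$ and $(0,w)$ separately in $\mathfrak{m}'$, not that $w$ vanishes). The correct mechanism is that \emph{every} triple bracket of elements of $\mathfrak{m}'$ has vanishing $I_1$-components (brackets of collinear vectors are zero), while the $I_2$-components of such brackets sweep out the whole diagonal; this puts the pure $I_2$-elements inside $\mathfrak{m}'$ and the splitting follows. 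That part is repairable; the three-way coupling above is not, short of adding the missing computation. Your derivation of the tree structure from the product classification is fine once the classification is in hand.
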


\begin{figure}
	\begin{center}
		\includegraphics[width=0.4\textwidth]{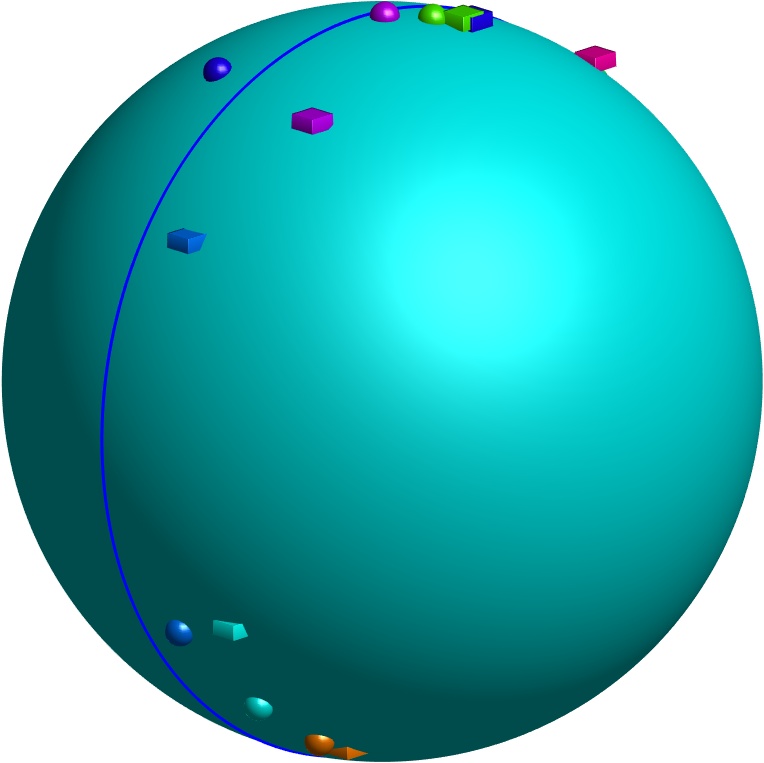}
	\end{center}
	\caption{\label{fig:2poly} Fitting data on a polysphere (see Example \ref{2p}). The rotations of points $x_i$ are plotted as cubes, whilst the points $y_i$ are plotted as circles; a different colour is chosen for each $i$. The great circle shows the best subspace $S^1\subset S^2\subset S^2\times S^2$.}
\end{figure}

The complexity of rooted tree arising from principal symmetric space approximations of polyspheres shows that a model selection problem cannot be avoided; see the remarks in the introduction and \S \ref{model_selection}.

\begin{example}\label{2p}
	We illustrate the behaviour with two synthetic datasets on $S^2\times S^2$. The data of \ref{fig:2poly} illustrates the middle branch of the tree, whilst the rightmost branch of the tree is shown in Figure \ref{fig:2tpoly}.
\end{example}

\begin{figure}
	\begin{center}
		\includegraphics[width=0.3\textwidth]{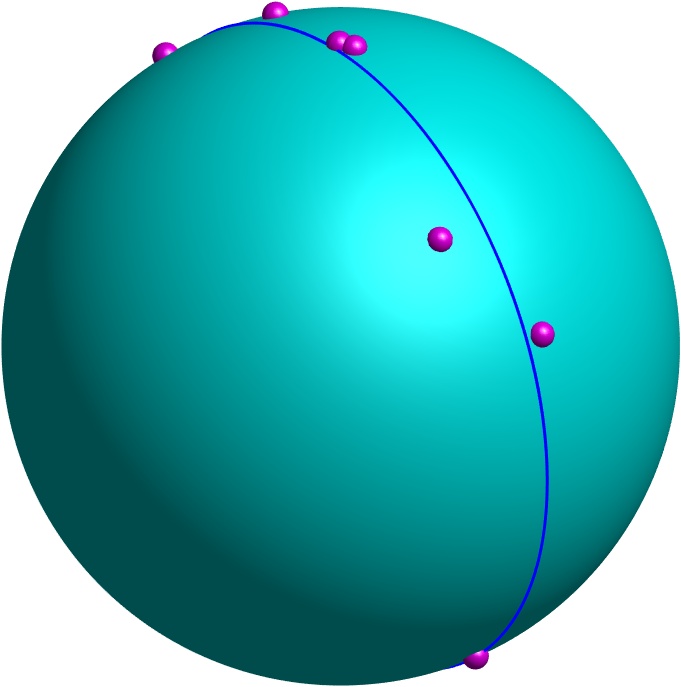}
		\includegraphics[width=0.3\textwidth]{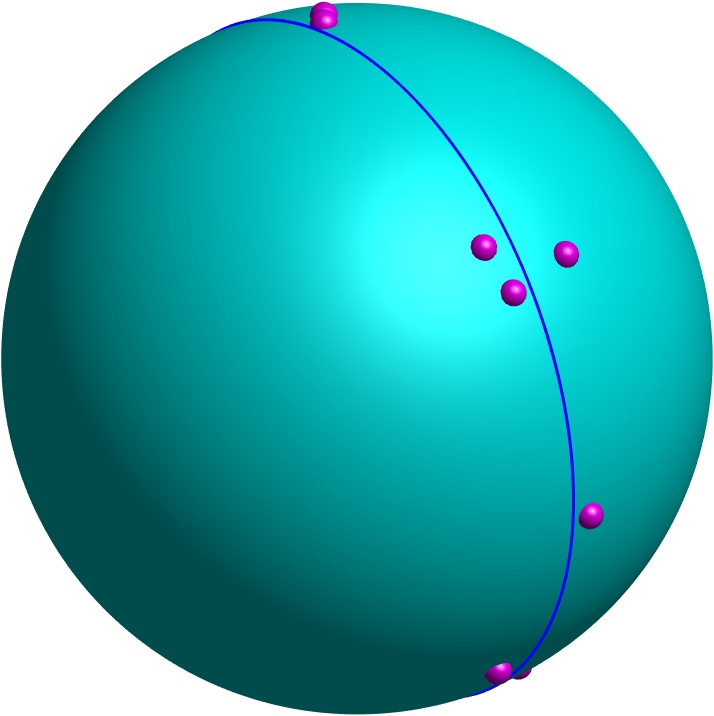}
		\includegraphics[width=0.35\textwidth]{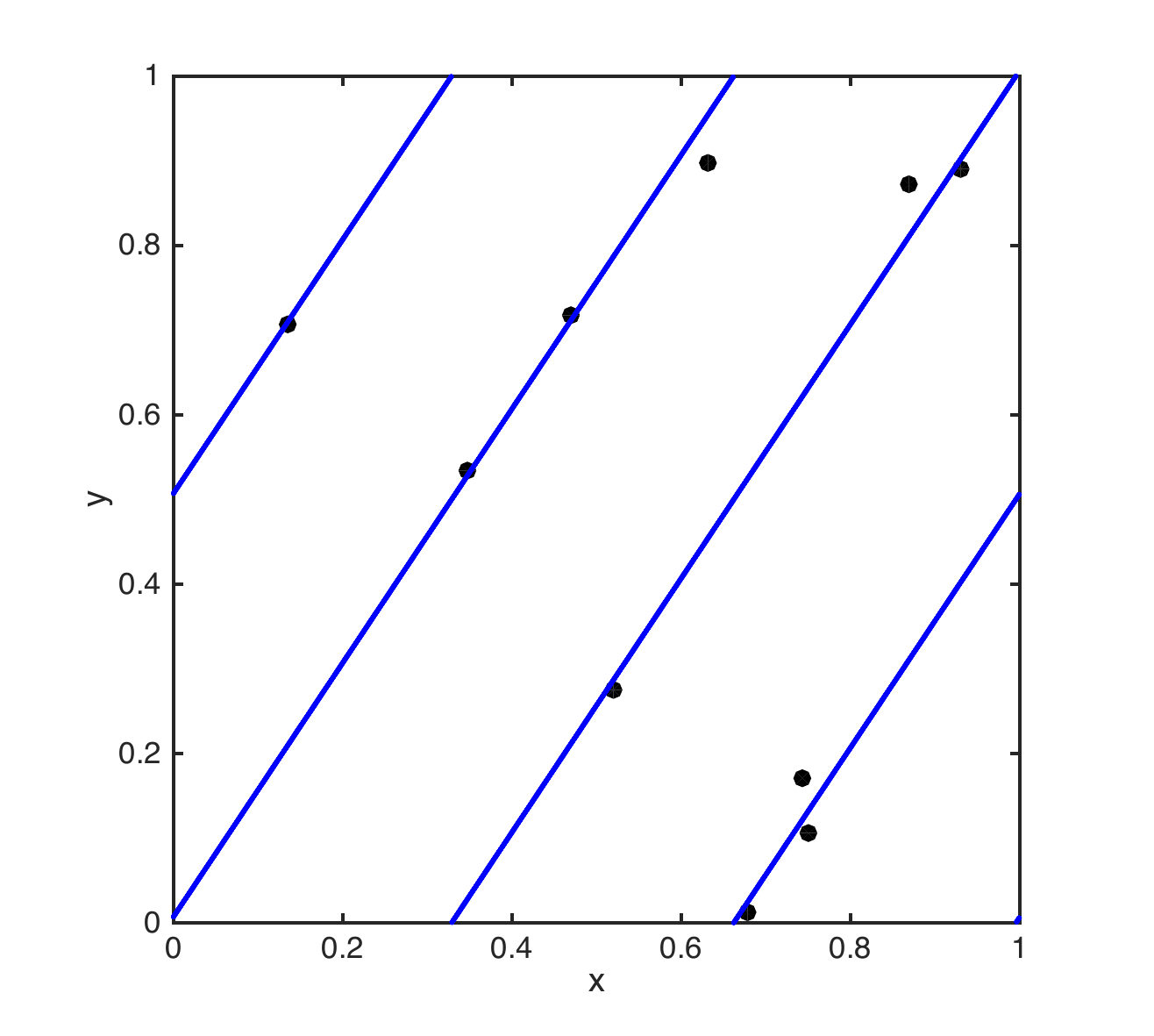}
	\end{center}
	\caption{\label{fig:2tpoly} Fitting data on a polysphere (see Example \ref{sp}). The best approximating torus $S^1\times S^1\subset S^2\times S^2$ is shown with two great circles inside the two spheres. Due to the difficulty of plotting the nested approximation $S^1\subset S^2\times S^2$ we have plotted (right) the projection of the points in $S^2\times S^2$ to the approximating torus $S^1\times S^1$ and shown the best approximating $S^1$ as a subset of this.}
\end{figure}

\appendix

\small

\section{Proofs for polyspheres}

We collect here the proofs of the classification results for geodesic submanifolds of polyspheres. We begin by stating some useful lemmas.

\begin{lemma}\label{subspace}
	The two-dimensional vector subspaces of $\mathbb{R}^2 \oplus \mathbb{R}^2$ take one of the forms (up to a reordering of basis elements):
	\begin{enumerate}
		\item 	\[
		\{ (\xi, A\xi) \;|\; \xi \in\mathbb{R}^2 \},\quad A\in\mathrm{Mat}(2\times 2)
		\]
		\item \[
		\{ (t_1\zeta_1, t_2\zeta_2) \;|\; t_i\in\mathbb{R} \},\quad \zeta_i\in\mathbb{R}^2
		\]
	\end{enumerate}
	The three-dimensional vector subspaces of $\mathbb{R}^2 \oplus \mathbb{R}^2$ take the form (up to a reordering of basis elements):
	\[
	\{ (\xi, A\xi + t\zeta) \;|\; \xi\in\mathbb{R}^2,t\in\mathbb{R} \},\quad \zeta \in\mathbb{R}^2, A\in\mathrm{Mat}(2\times 2)
	\]
\end{lemma}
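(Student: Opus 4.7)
The plan is to classify subspaces of $\mathbb{R}^2\oplus\mathbb{R}^2$ by looking at the behaviour of the two projections $\pi_1,\pi_2$ onto the summands. For a subspace $V$, set $V_i=\pi_i(V)$ and $K_i=\ker(\pi_i|_V)=V\cap(\mathbb{R}^2\text{ in slot }\bar i)$; rank--nullity gives $\dim V=\dim V_i+\dim K_i$. The ``up to reordering of basis elements'' clause will be used exactly to swap the roles of the two summands when necessary.

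For the 2-dimensional case, I split into subcases according to $\dim V_1$. If $\dim V_1=2$, then $\dim K_1=0$, so $\pi_1|_V\colon V\to\mathbb{R}^2$ is an isomorphism; define $A\in\mathrm{Mat}(2\times 2)$ by declaring $A\xi$ to be the second coordinate of $(\pi_1|_V)^{-1}(\xi)$, giving form~1. If $\dim V_2=2$ but $\dim V_1<2$, swap the factors and apply the previous case to obtain form~1 after reordering. Otherwise both $\dim V_i\le 1$; since $V\subset V_1\oplus V_2$ and $\dim V=2$, we must have $\dim V_1=\dim V_2=1$ and $V=V_1\oplus V_2$. Choosing nonzero $\zeta_i\in V_i$ yields form~2.

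For the 3-dimensional case, the same dichotomy applies. If $\dim V_1=2$, then $\dim K_1=1$; write $K_1=\{(0,t\zeta):t\in\mathbb{R}\}$ for some $\zeta\in\mathbb{R}^2$. Since $\pi_1|_V$ is surjective onto $\mathbb{R}^2$, pick any linear section $s\colon\mathbb{R}^2\to V$; writing $s(\xi)=(\xi,A\xi)$ defines a linear map $A$, and every element of $V$ decomposes uniquely as $s(\xi)+(0,t\zeta)=(\xi,A\xi+t\zeta)$, which is the asserted form. If instead $\dim V_1<2$, then from $\dim V=\dim V_1+\dim K_1\le 1+2=3$ we need $\dim V_1=1$ and $K_1=\{0\}\oplus\mathbb{R}^2$, so $V=V_1\oplus\mathbb{R}^2$; after swapping the two summands this is the special case $A=0$ with $\zeta$ a generator of $V_1$.

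The whole argument is essentially rank--nullity applied twice, so there is no real obstacle; the only point that requires attention is to verify that each of the degenerate situations (e.g.\ a $2$-dimensional graph where $A$ happens to have rank $0$ or $1$, or a $3$-dimensional $V$ whose second projection is not full-rank) is already covered by one of the listed normal forms, which is why the statement permits the reordering of the two $\mathbb{R}^2$ factors.
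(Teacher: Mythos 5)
Your proof is correct and follows essentially the same route as the paper's: both classify $V$ by the rank of its projection onto one factor (invoking the reordering clause to swap factors when needed), realize $V$ as a graph $(\xi,A\xi)$ when that projection is an isomorphism, and handle the degenerate low-rank cases separately. Your rank--nullity and linear-section phrasing is just a coordinate-free restatement of the paper's explicit computation with spanning vectors and the matrix $\big(v_1\big|v_2\big)\big(u_1\big|u_2\big)^{-1}$.
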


\begin{proof}
	Suppose the subspace $V$ is spanned by two vectors, $(u_1,v_1),(u_2,v_2)$. We first consider the case where $u_1$ and $u_2$ are linearly independent. Then if $(x,y)\in V$ we have $x=a u_1 + b u_2$, where 
	\[
	\begin{pmatrix}
	a \\
	b
	\end{pmatrix}
	= \Big( u_1 \big| u_2 \Big)^{-1} x,
	\]
	and hence $y= a v_1 + b v_2$, i.e.
	\[
	y = \Big( v_1 \big| v_2 \Big)\Big( u_1 \big| u_2 \Big)^{-1} x,
	\]
	and the subspace takes the form $(x,Bx)$ for some matrix $B$. If the $u_i$ are linearly dependent, but the $v_i$ are linearly independent, the argument is similar. The remaining case to consider is where both the $u_i$ and $v_i$ are linearly dependent, in which case the subspace takes the form $(t_1 u, t_2 v)$, for some fixed vectors $u$ and $v$.
	
	Now suppose the subspace $V$ is generated by $(u_1,v_1),(u_2,v_2),(u_3,v_3)$. Then either the $(u_i)$ or $(v_i)$ span $\mathbb{R}^2$, assume that $(u_i)$ do. For any fixed $t$, if $(x,y)\in V$ then $x=a u_1 + b u_2 + t u_3$ in a unique manner, indeed
	\[
	\begin{pmatrix}
	a \\
	b
	\end{pmatrix}
	= \Big( u_1 \big| u_2 \Big)^{-1} (x-tu_3).
	\] 
	Then
	\[
	y = \Big( v_1 \big| v_2 \Big)\Big( u_1 \big| u_2 \Big)^{-1} x + t \Big( v_3 - \Big( v_1 \big| v_2 \Big)\Big( u_1 \big| u_2 \Big)^{-1} u_3 \Big)
	\]
	and we see that $V$ takes the form $(x,Ax + tv)$ for some fixed $A$ and $v$.
\end{proof}

We will also make use of the following elementary results.
\begin{lemma}\label{matrices}
	Let $A\in\mathfrak{so}(2)$ be non-zero, and let $B$ be a $2\times 2$ matrix. Suppose that $BAB^T Bz = BAz$ for all $z\in\mathbb{R}^2$. Then either $B\in O(2)$ or $B=0$.
\end{lemma}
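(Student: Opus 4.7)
The plan is to rewrite the hypothesis as a matrix equation and then split on the rank of $B$. Since the identity $BAB^T Bz = BAz$ holds for every $z\in\mathbb{R}^2$, it is equivalent to the matrix identity
\[
BA(B^T B - I) = 0.
\]
Recall that $\mathfrak{so}(2)$ is spanned by $J=\left(\begin{smallmatrix}0 & -1 \\ 1 & 0\end{smallmatrix}\right)$, so any nonzero $A\in\mathfrak{so}(2)$ is a nonzero scalar multiple of $J$ and is in particular invertible.

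I would then consider the rank of $B$. If $B$ is invertible, then $BA$ is invertible, and the equation above forces $B^T B = I$, i.e.\ $B\in O(2)$. If $B=0$ we are done. The only remaining case is $\operatorname{rank}(B)=1$, where I would aim for a contradiction. Write $B = u v^T$ with $u,v\in\mathbb{R}^2\setminus\{0\}$. A direct computation gives
\[
B^T B = (u^T u)\, v v^T, \qquad BAB^T B = \alpha (u^T u)(v^T J v)\, u v^T,
\]
where $A=\alpha J$. The key observation is that $v^T J v = 0$ because $J$ is antisymmetric, so the left-hand side of the hypothesis vanishes. On the other hand $BA = \alpha\, u (Jv)^T$, and since $J$ is invertible and $v\neq 0$, we have $Jv\neq 0$, hence $BA\neq 0$. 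This contradicts $BA = BAB^T B = 0$, ruling out the rank-$1$ case.

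I do not expect any serious obstacle: the only thing to be slightly careful about is the rank-$1$ case, and the trick $v^T J v = 0$ (which is just antisymmetry of $J$) immediately collapses that case. Everything else is linear algebra on $2\times 2$ matrices.
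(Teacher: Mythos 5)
Your proof is correct, but it takes a genuinely different route from the paper's. You reduce the hypothesis to $BA(B^TB-I)=0$ and split on $\operatorname{rank}(B)$: the invertible case is immediate since $BA$ is then invertible, and the rank-one case is killed by writing $B=uv^T$ and observing that $v^TJv=0$ forces $BAB^TB=0$ while $BA=\alpha\,uv^TJ\neq 0$. (Minor nit: $v^TJ=-(Jv)^T$, so $BA=-\alpha\,u(Jv)^T$; the sign is irrelevant to the non-vanishing.) The paper instead multiplies the identity $BAB^TB=BA$ on the left by $B^T$, sets $C=B^TB$, and exploits the fact that $CAC$ is antisymmetric and $\mathfrak{so}(2)$ is one-dimensional to get $CAC=kA$; a short computation then yields $k^2=k$ and, since $A$ is invertible, $C=kI$ with $k\in\{0,1\}$, i.e.\ $B=0$ or $B\in O(2)$. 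The paper's argument avoids any case analysis and is uniform in $C$, at the cost of leaning on the one-dimensionality of $\mathfrak{so}(2)$; yours is more elementary and makes the geometric obstruction in the rank-one case explicit (the quadratic form of an antisymmetric matrix vanishes), though it is equally tied to the $2\times 2$ setting via the factorization $B=uv^T$. Either proof is acceptable.
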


\begin{proof}
	Firstly note that the condition $BAB^T Bz = BAz$ for all $z\in\mathbb{R}^2$ is equivalent to $BAB^T B = BA$, by for instance the bijective correspondence between linear transformations and matrices. Let $C = B^T B$; multiplying both sides by $B^T$ results in the relation $CAC = CA$. Note that $C$ is symmetric, whilst $A$ is antisymmetric, and hence $CAC$ is also antisymmetric; as $\mathfrak{so}(n)$ is one-dimensional we have $CAC = kA$ for some $k\in\mathbb{R}$. This gives also $CA = kA$, and taking transposes $-AC = -kA$. Then $CAC = C(AC) = kCA = k^2 A$. It follows that $k^2 A = k A$, and hence $k = 0$ or $1$. Moreover, as  $A$ is invertible, we conclude that $C = kI$ and the result follows.
\end{proof}

\begin{lemma}\label{exp}
	Let $V=\{ m(\xi, A\xi) \;|\; \xi \in\mathbb{R}^2 \}$, where $A\in\mathrm{Mat}(2\times 2)$. Then the submanifold $\{ \exp(v)\cdot x \;|\; v\in V \}$ takes the form $\{(x,Rx)\in P\}\simeq S^2$, for some $R\in O(3)$
\end{lemma}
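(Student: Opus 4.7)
The plan proceeds in two stages: first use the Lie triple system condition to force $A\in O(2)$, then construct $R\in O(3)$ explicitly from $A$ by a conjugation-intertwining relation.

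For the first stage, I would apply the identities $[m(\xi),m(\zeta)] = h([[\xi,\zeta]])$ and $[h(B),m(\xi)] = m(B\xi)$ recalled at the start of Section~\ref{sec:polysphere}, together with the elementary relation $[[A\xi_1,A\xi_2]] = A[[\xi_1,\xi_2]]A^T$, to compute, for $v_i = (m(\xi_i), m(A\xi_i))\in V$,
\[
[[v_1,v_2],v_3] = \bigl(m([[\xi_1,\xi_2]]\xi_3),\; m(A[[\xi_1,\xi_2]]A^T A\xi_3)\bigr).
\]
Membership of this triple bracket in $V$ requires $ABA^T A\xi_3 = AB\xi_3$ for all $\xi_3\in\mathbb{R}^2$ and all $B$ of the form $[[\xi_1,\xi_2]]$. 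Since $[[e_1,e_2]]$ generates the one-dimensional space $\mathfrak{so}(2)$ and the condition is linear in $B$, this reduces to the single identity $AB_0A^T A = AB_0$ for a fixed nonzero $B_0\in\mathfrak{so}(2)$, which is precisely the hypothesis of Lemma~\ref{matrices} (with our $A$ in the role of the $B$ in that lemma). Hence either $A\in O(2)$ or $A=0$; the degenerate case $A=0$ yields $S^2\times\{e_1\}$, corresponding to case 2 of Theorem~\ref{2d}, so I would set it aside and work henceforth under $A\in O(2)$.

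For the second stage, I would set
\[
\tilde A := \begin{pmatrix} 1 & 0 \\ 0 & A \end{pmatrix} \in O(3),
\]
which fixes the pole $e_1$ (the origin of $S^2$ for the present symmetric space structure), and verify by block multiplication that $\tilde A\, m(\xi)\, \tilde A^{-1} = m(A\xi)$. By the standard conjugation property of the matrix exponential, this gives $\exp(m(A\xi)) = \tilde A\,\exp(m(\xi))\,\tilde A^{-1}$, and acting on $e_1$ yields
\[
\exp(m(A\xi))\cdot e_1 \;=\; \tilde A\cdot\bigl(\exp(m(\xi))\cdot e_1\bigr).
\]
Writing $y := \exp(m(\xi))\cdot e_1$ and invoking surjectivity of the Riemannian exponential on $S^2$, the submanifold through $(e_1,e_1)$ parametrised by $V$ equals $\{(y,\tilde A y) : y\in S^2\}$, as asserted with $R=\tilde A$. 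A base point $(g_1,g_2)\cdot(e_1,e_1)$ away from the origin simply replaces $R$ by $g_2\tilde A g_1^{-1}$, which is again in $O(3)$. The main obstacle is the first stage: correctly aligning the quadruple product $ABA^T A$ with the hypothesis of Lemma~\ref{matrices}; once $A\in O(2)$ is known, the second stage is a direct verification with the obvious candidate $\tilde A$.
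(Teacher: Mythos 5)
Your second stage is exactly the paper's proof: conjugate $m(\xi)$ by the block-orthogonal matrix built from $A$ (the paper writes $\mathrm{diag}(A,1)$ with pole $e_z$, you write $\mathrm{diag}(1,A)$ with pole $e_1$ — equivalent conventions, yours matching the main text's form of $m(\xi)$), pull the conjugation through the exponential, and use that $\exp(m(\xi))\cdot e_1$ sweeps out all of $S^2$. Your first stage — deriving $A\in O(2)$ from the Lie triple system condition via Lemma~\ref{matrices} — is not part of the paper's proof of this lemma but is performed in the proof of Theorem~\ref{2d} before the lemma is invoked; you are right that orthogonality is genuinely needed (the statement's hypothesis $A\in\mathrm{Mat}(2\times 2)$ is loose and the paper's proof silently assumes $A\in O(2)$), so including that stage is harmless and arguably tidier.
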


\begin{proof}
	Without loss of generality we consider a basis such that $x=(e_z,e_z)$, in which
	\[
	\exp(m(A\xi))\cdot e_z
	\]
	for some $A\in O(2)$, where $e_z=(0,0,1)^T$. Note that $m(A\xi)=Bm(\xi)B^T$, where $B\in O(3)$ takes the form
	\[
	B = \begin{pmatrix}
	A & 0 \\
	0 & 1
	\end{pmatrix}
	\] 
	Then
	\[
	\exp(m(A\xi))=\exp(Bm(\xi)B^T) = B\exp(m(\xi))B^T,
	\]
	and as $B^T e_z = e_z$, we have that
	\[
	\exp(m(A\xi))\cdot e_z = B \exp(m(\xi))\cdot e_z,
	\]
	as $\{\exp(m(\xi))\cdot e_z\}$ spans $S^2$ the result follows immediately.
\end{proof}

\begin{lemma}\label{aux mat}
	Suppose that $D\in O(2)$, and that $D^T xy^T = xy^T D$ for all $x,y\in\mathbb{R}^2$. Then $D=I$.
\end{lemma}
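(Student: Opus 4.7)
The plan is to promote the identity
\[
D\tran xy\tran = xy\tran D
\]
from rank-one outer products to the full matrix algebra, and then exploit that the centre of $\mathrm{Mat}(2\times 2)$ consists only of scalar matrices.

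First, since every $2\times 2$ matrix is a sum of at most two outer products $xy\tran$ (for instance, using $(e_i,e_j)$ with $i,j\in\{1,2\}$ as a spanning family for rank-one matrices), bilinearity in $(x,y)$ extends the hypothesis to $D\tran M = MD$ for every $M\in\mathrm{Mat}(2\times 2)$. Setting $M=I$ in this extended identity gives $D\tran = D$, so $D$ is symmetric. Substituting $D\tran=D$ back into the identity yields $DM=MD$ for every $M$, so $D$ lies in the centre of the matrix algebra and must be a scalar matrix $D=\lambda I$. Combined with the orthogonality hypothesis $D\tran D = I$, this forces $\lambda^{2}=1$, and hence $\lambda\in\{+1,-1\}$.

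The final step — forcing $\lambda=+1$ rather than $\lambda=-1$ — is where I expect the main difficulty to lie. The commutation identity is manifestly insensitive to the overall sign of $D$, so this step cannot be accomplished by any further algebraic closure of the hypothesis and must exploit the rank-one structure of $xy\tran$ directly. My plan is to factor the identity as $(D\tran x)\,y\tran = x\,(D\tran y)\tran$ and match the two rank-one decompositions, which recovers $D\tran x = \alpha x$ and $D\tran y = \alpha y$ with a common scalar $\alpha$ independent of $(x,y)$. To pin down the sign $\alpha=+1$, I would evaluate the hypothesis on a specific test pair $(x_{0},y_{0})$ from which a nonnegative scalar invariant (such as an inner product or a diagonal entry of the resulting outer product) can be extracted whose form is incompatible with $\alpha=-1$. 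Identifying the right invariant to expose the sign is the technical heart of the argument, and the step on which the proof stands or falls.
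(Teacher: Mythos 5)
Your argument through $D=\lambda I$ with $\lambda^{2}=1$ is correct, and it is a cleaner, more conceptual packaging of what the paper actually does: the paper substitutes the three outer products $e_{1}e_{1}^{T}$, $e_{2}e_{2}^{T}$, $e_{1}e_{2}^{T}$ into the hypothesis and reads off $b=0$, $c=0$, $a=d$ for $D=\left(\begin{smallmatrix}a&b\\ c&d\end{smallmatrix}\right)$; that is exactly your bilinear extension to all of $\mathrm{Mat}(2\times 2)$ carried out by hand on a spanning set of rank-one matrices, followed by the observation that the centre of the matrix algebra consists of the scalar matrices. Both routes arrive at $D=\pm I$.

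The step you flag as the technical heart is not merely hard, it is impossible, and your own observation that the hypothesis is invariant under $D\mapsto -D$ is already the proof of that: $D=-I$ lies in $O(2)$ and satisfies $D^{T}xy^{T}=-xy^{T}=xy^{T}D$ for all $x,y$, so the lemma as stated is false, and no choice of test pair $(x_{0},y_{0})$ or nonnegative invariant can exclude $\lambda=-1$. You should abandon that plan and record the correct conclusion $D=\pm I$. The paper's own proof has the identical gap: from $b=c=0$ and $a=d$ it asserts that ``the result follows,'' whereas orthogonality only pins down $a=\pm 1$. The statement should be amended to $D=\pm I$; in its sole application (the proof of Theorem~\ref{high poly}) the extra case $D=A^{T}B=-I$, i.e.\ $B=-A$, then needs one further line: the two cross terms in the bracket cancel and the Lie-triple condition collapses to $[[x_{1},x_{2}]]y_{3}=[[y_{1},y_{2}]]x_{3}$ for all arguments, which is again impossible, so the downstream theorem is unharmed.
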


\begin{proof}
	Let $C_i=x_i y_i^T$, with $x_1=y_1=(1,0)^T, x_2=y_2=(0,1)^T, x_3=(1,0)^T, y_3=(0,1)$. Writing out 
	\[
	D=\begin{pmatrix}
	a & b \\
	c & d
	\end{pmatrix}
	\]
	we see that the equations $D^T C_i = C_i D$ give respectively $b=0$, $c=0$ and $a=d$, from which the result follows.
\end{proof}

\noindent\textbf{Proof of Theorem~\ref{2d}}

\begin{proof}
	We must search for and exponentiate Lie triple systems $\mathfrak{m'}\subset \mathfrak{m}+\mathfrak{m}$; these take the form $m(V)$, where $V\subset \mathbb{R}^2 \oplus \mathbb{R}^2$ take the forms described in Lemma~\ref{subspace}. Amongst the two-dimensional cases, we begin by those of the form $V = \{ (\xi, B\xi) \;|\; \xi \in\mathbb{R}^2 \},\quad B\in\mathrm{Mat}(2\times 2)$, whereupon we compute
	\[
	[[m(x,Bx),m(y,By)],m(z,Bz)] = m\big( [[x,y]]z, B[[x,y]]B\tran  Bz\big).
	\]
	As $[[x,y]]\in\mathfrak{o}(2)$, Lemma~\ref{matrices} shows that we obtain a Lie triple system if either $B\tran  B=I$, or $B=0$. We see by Lemma~\ref{exp} that taking an orthogonal $B$ results in a subspace of the first kind listed, whilst taking $B=0$ trivially results in the second kind. It remains to check subspaces $\{ (t_1\zeta_1, t_2\zeta_2) \;|\; t_i\in\mathbb{R} \},\quad \zeta_i\in\mathbb{R}^2$, but these are trivially totally geodesic, as then $[\mathfrak{m'},\mathfrak{m'}]=0$. Exponentiating the resulting subspace gives the third case of the lemma.
	
	We then consider the three-dimensional submanifolds,  where $V=(\xi, A\xi + t\zeta) \;|\; \xi\in\mathbb{R}^2,t\in\mathbb{R} \},\quad \zeta \in\mathbb{R}^2, A\in\mathrm{Mat}(2\times 2)$. We first compute
	\[
	[m(Bx + sv), m(By + tv)] = h\big( B[[x,y]]B\tran  + v(tx\tran  - sy\tran )B\tran  + B(sy-tx)v\tran  \big)
	\]
	It follows that
	\begin{align*}
	[[m(Bx+sv),m(By+tv)],m(Bz+rv)] &= \\
	m\big( B[[x,y]]B\tran  Bz &+ v(tx\tran  - sy\tran )B\tran  Bz + B(sy-tx)v\tran  Bz \\ 
	+ rB[[x,y]]B\tran  v &+ rv(tx\tran  - sy\tran )A\tran  v + rA(sy-tx)v\tran  v \big)
	\end{align*}
	To obtain a Lie triple system, we require that the right hand side equal $m(B[[x,y]]z + pv)$, for some scalar $p$, for all $x,y,z$. This is only possible if $B=0$.
	Indeed, setting $s,t=0$ results in the equation
	\[
	B[[x,y]]B\tran  Bz + rB[[x,y]]B\tran  v = B[[x,y]]z + pv,
	\]
	fix $x,y$ so that $[[x,y]]=A$ is an arbitrary (non-zero) matrix in $\mathfrak{o}(2)$. We are left with two lines in $\mathbb{R}^2$ (as we vary $r,p$), it follows that we require $v$ to be an eigenvector of $BAB\tran$, however as $BAB\tran\in\mathfrak{o}(2)$ it has no real eigenvectors unless $B=0$. The results then follows immediately upon exponentiating $m(V)$.
\end{proof}

\noindent\textbf{Proof of Theorem~\ref{high poly}}

\begin{proof}
	We begin by noting that the case $S^2\times S^2$ follows this pattern: the edge $1$ is the two-dimensional reduction resulting from a coupling of spheres, the edge $2$ comes from the inclusion ${x_0}\subset S^1$ , and the edge $3$ is an inclusion $S^1\subset S^2$. The remaining edges are clearly also of this pattern. 
	
	We now sketch a proof that the reductions of Lemma $\ref{2d}$ are the only possible ones, even for higher polyspheres. The main point is that result and proof of Lemma~\ref{subspace} is generic, indeed the vector subspaces of $\mathbb{R}^2\oplus\cdots\oplus\mathbb{R}^2$ take the form
	\[
	\{ (\xi_1, \ldots, \xi_m, \ldots, A^d_1\xi_1 + \ldots + A^d_m \xi_m + t_d v_d) \;|\; \xi_1,\ldots,\xi_m \in\mathbb{R}^2, t_1,\ldots,t_d \in\mathbb{R} \}
	\]
	The argument is essentially the same as before, roughly we proceed by letting $\{u^1,\ldots,u^k\}$ be a basis for the subspace, where we can write each vector $u^i = (u^i_1,\ldots u^i_n)$, each $u^i_j\in\mathbb{R}^2$ being a $2$d column vector. Form the block matrix $U = [u]^j_i$, a rank $k$ matrix of size $2n\times k$. Form a $k\times k$ submatrix of full rank by discarding rows. Consider a vector $(x_1,\ldots,x_n)$; the form of $x_i$ depends on the discards in the rows of block $i$:
	Where no rows are discarded, we have a free $\xi_i$, if both rows are discarded we $A^i_j \xi_j$ where we discarded both rows, we have in addition the $t_j v_j$ where we discarded only one.
	
	Consider then the possible Lie triple systems $m(V)$, where $V$ takes the form above. Pick two terms from $V$; these must reduce to one of the cases described in Lemma~\ref{2d} if we set the other free terms $\xi_j,t_j$ to zero. This proves that the $A^i_j$ must be orthogonal or zero, and must be zero if paired with a $tv$ term; it remains to show that for any given $i$ only one $A^i_j$ can be non-zero. For this purpose we compute
	\begin{align*}
	[[m(Ax_1 + By_1),&m(Ax_2+By_2)],m(Ax_3 + By_3)] = \\
	\bigg(
	&A[[x_1,x_2]]A^T + A(x_1 y_2^T - x_2 y_1^T) B^T \\
	&+ B(x_2 y_1^T - x_1 y_2^T) A^T + B[[y_1,y_2]]B^T
	\bigg) (Ax_3 + By_3),
	\end{align*}
	which must equal $A[[x_1,x_2]]x_3 + B[[y_1,y_2]]y_3$ if we are to obtain a Lie triple system. Following our argument we assume that $A,B$ are both orthogonal. Now setting $x_1=-x_2=x$, and $y_1=y_2=y$ results in
	\[
	2(ACB^T - BCA^T)(Ax_3 + By_3) = 0,
	\]
	where $C=xy^T$. As $(Ax_3 + By_3)$ spans $\mathbb{R}^2$ we are left with the relation $ACB^T = BCA^T$, and as $A,B$ are orthogonal it follows that $D^T C = CD$, where $D=A^T B$. Lemma~\ref{aux mat} then shows that $D=I$ and hence $A=B$. The main equation becomes
	\[
	A\big([[x_1,x_2]]+[[y_1,y_2]]\big)(x_3+y_3) = A\big([[x_1,x_2]]x_3 + [[y_1,y_2]]y_3\big),
	\]
	and hence
	\[
	A\big([[x_1,x_2]]y_3 + [[y_1,y_2]]x_3\big) = 0,
	\]
	which cannot hold for all $x_i, y_i$ as $A$ is invertible.
\end{proof}

\end{document}